\theoremstyle{plain}
\newtheorem*{thm*}{Theorem}
\newtheorem{thm}{Theorem}[section]
\Crefname{thm}{Theorem}{Theorems}
\newtheorem*{lem*}{Lemma}
\newtheorem{lem}[thm]{Lemma}
\Crefname{lem}{Lemma}{Lemmas}
\newtheorem*{claim*}{Claim}
\newtheorem{claim}[thm]{Claim}
\crefname{claim}{Claim}{Claims}
\Crefname{claim}{Claim}{Claims}
\Crefname{prop}{Proposition}{Propositions}
\newtheorem{cor}[thm]{Corollary}
\crefname{cor}{Corollary}{Corollaries}
\crefname{conj}{Conjecture}{Conjectures}
\Crefname{qn}{Question}{Questions}
\Crefname{obs}{Observation}{Observations}
\Crefname{ex}{Example}{Examples}
\theoremstyle{definition}
\Crefname{prob}{Problem}{Problems}
\Crefname{defn}{Definition}{Definitions}
\theoremstyle{remark}
\xpatchcmd{\proof}{\itshape}{\normalfont\proofnamefont}{}{}
\newcommand{\proofnamefont}{}
\renewcommand{\proofnamefont}{\bfseries}
\newcommand{\remove}[1]{}
\newcommand{\ceil}[1]{
    \lceil #1 \rceil
}
\newcommand{\Z}{\mathbb{Z}}
\newcommand{\R}{\mathbb{R}}
\newcommand{\G}{\mathcal{G}}
\title{Fractional triangle decompositions in almost complete graphs}
\author{
	Vytautas Gruslys\thanks{
		Email: \texttt{vytautas.gruslys}@\texttt{gmail.com}.
	}
	\and
    Shoham Letzter\thanks{
		Department of Mathematics, 
		University College London, 
		Gower Street, London WC1E~6BT, UK. 
		Email: \texttt{s.letzter}@\texttt{ucl.ac.uk}. 
		Research supported by the Royal Society.
    }
}
\newcommand{\codeg}{\bar{d}}
\newcommand{\boldd}{\mathbf{d}}
\begin{document}

\date{}
\maketitle

\begin{abstract}

	We prove that every $n$-vertex graph with at least $\binom{n}{2} - (n - 4)$ edges has a fractional triangle decomposition, for $n \ge 7$. This is a key ingredient in our proof, given in a companion paper, that every $n$-vertex $2$-coloured complete graph contains $n^2/12 + o(n^2)$ edge-disjoint monochromatic triangles, which confirms a conjecture of Erd\H{o}s.

	\setlength{\parskip}{\medskipamount}
    \setlength{\parindent}{0pt}
    \noindent

\end{abstract}

	\section{Introduction}

		A \emph{triangle packing} in a graph $G$ is a collection of edge-disjoint triangles, and a \emph{triangle decomposition} is a triangle packing that covers all the edges. A \emph{fractional triangle packing} in a graph $G$ is an assignment of weights in $[0,1]$ to the triangles in $G$, such that the total weight of every edge is at most $1$; namely, $\sum_{w \in V(G)} \omega(uvw) \le 1$ for every edge $uv$ (where $\omega(uvw) = 0$ if $uvw$ is not a triangle).
		Given a triangle packing $\omega$ and an edge $e = uv$ in $G$, we define $\omega(e) = \sum_{w \in V(G)} \omega(uvw)$; so $\omega(e) \le 1$. A \emph{fractional triangle decomposition} in a graph $G$ is a fractional triangle packing $\omega$, satisfying that $\omega(e) = 1$ for every edge $e$.
		Our main result in this paper is the following theorem, which shows that almost complete graphs have fractional triangle decompositions.

		\begin{thm} \label{thm:n-four}
			Let $G$ be a graph on $n \ge 7$ vertices with $e(G) \ge \binom{n}{2} - (n - 4)$. Then there is a fractional triangle decomposition in $G$.
		\end{thm}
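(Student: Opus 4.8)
The plan is to pass to the complement $H := \overline{G}$, which satisfies $e(H) \le n-4$, and to construct the fractional triangle decomposition explicitly as a controlled perturbation of the uniform weighting on $K_n$. Write $N := n-2$. Since assigning weight $1/N$ to every triangle of $K_n$ is a fractional triangle decomposition of $K_n$, a natural base weighting for $G$ is $\omega_0$, which assigns $1/N$ to every triangle of $G$ and $0$ elsewhere. For an edge $e = uv \in E(G)$ the triangle $uvw$ lies in $G$ precisely when $w \notin N_H(u)\cup N_H(v)$, so $\omega_0(e) = 1 - b(e)/N$ where $b(e) := |N_H(u)\cup N_H(v)|$; thus $\omega_0$ has a \emph{deficiency} $\delta(e) := 1 - \omega_0(e) = b(e)/N \ge 0$ on each edge. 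It therefore suffices to produce a correction $\Delta \colon \T(G)\to\R$ with $\Delta(e) = \delta(e)$ for every edge $e$ and $\Delta(T) \ge -1/N$ for every triangle $T$: then $\omega := \omega_0 + \Delta$ is nonnegative and satisfies $\omega(e) = 1$ for all $e$, while $\omega(T) \le 1$ is automatic since $\omega(T) \le \omega(e) = 1$ for any edge $e$ of $T$. (Equivalently, via LP duality one could instead rule out a weighting of $E(G)$ that is nonnegative on every triangle yet has negative total weight; but the constructive route seems cleaner.)

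The correction $\Delta$ should be built locally around the edges of $H$. Concretely, I would let $\Delta(T)$ depend only on the ``$H$-type'' of $T$ --- the multiset of $H$-degrees of its vertices, and, for each of its three pairs, whether that pair has a common $H$-neighbour (equivalently lies in a short path or triangle of $H$) --- and fix the free parameters of this weighting by solving the small linear system imposed by the equations $\Delta(e) = \delta(e)$, which after symmetrising likewise depend only on the $H$-type of $e$. In the simplest case, where $H$ is a matching, $\Delta(T)$ need only depend on $|T \cap V(H)|$, the system reduces to three equations in four unknowns, and one is free to choose the remaining parameter; the sparsity of $H$ guarantees a choice with $\Delta(T) \ge -1/N$ as soon as $n$ is not too small. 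For a general $H$ I would either peel off a bounded ``dense part'' of $H$ and treat it separately or refine the $H$-type classification, the guiding principle being that deficiency created near an edge of $H$ is repaired by routing it through nearby triangles while the unavoidable negative weights are diluted over the many triangles far from $H$.

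The crux, and the place where the hypotheses enter, is the sign constraint $\Delta(T) \ge -1/N$. A single edge may have deficiency as large as $(n-4)/N$ --- for instance every edge from the centre of a near-spanning star in $H$ to a vertex outside that star --- and the negative weights used to offset such a large local correction must be spread thinly enough to stay above $-1/N$; this is exactly what should force $e(H) \le n-4$ and $n \ge 7$, with the extremal configurations being a near-maximal star in $H$ or a union of disjoint edges. I would therefore split into cases according to the coarse structure of $H$ (a vertex of large $H$-degree; bounded $H$-degree; and a few smallest values of $n$), design the local correction in each case, and verify the finitely many, essentially tight, numeric inequalities --- checking the few possible graphs $H$ directly when $n$ is small.
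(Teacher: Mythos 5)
Your reduction is sound as far as it goes: with $N=n-2$, the uniform weighting $\omega_0$ has deficiency $\delta(e)=b(e)/N$ on each edge, and producing a correction $\Delta$ on the triangles of $G$ with $\Delta(e)=\delta(e)$ for every edge and $\Delta(T)\ge -1/N$ for every triangle would indeed finish the proof. But that existence statement is just a restatement of the theorem, and your proposal never actually constructs $\Delta$: the matching case is left at ``one is free to choose the remaining parameter; the sparsity of $H$ guarantees a choice with $\Delta(T)\ge -1/N$'' without exhibiting the solution or the inequality, and for general $H$ the plan (``peel off a bounded dense part or refine the $H$-type classification'') is not specified enough to check. The symmetrisation step also hides work: the equations $\Delta(e)=\delta(e)$ only reduce to a small system if edges of the same ``type'' have the same deficiency and the same profile of triangle types, which for an arbitrary $H$ with up to $n-4$ edges forces exactly the kind of case analysis you defer. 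The crux you correctly identify --- the sign constraint near a vertex of large $H$-degree --- is precisely where the theorem is tight: if $H$ contains a star with $n-4$ leaves at $c$, the three $G$-edges at $c$ lie in at most two triangles of $G$ each, fixing essentially all the weight on the triangles inside $N_G(c)$, and whether the remaining weight on the edges among $N_G(c)$ can be completed depends delicately on the rest of $H$ (add one more missing edge there and, as the paper's extremal example with non-edges $\{xn : x\in\{4,\dots,n-1\}\}\cup\{12\}$ shows, no decomposition exists). ``Diluting the negative weight over triangles far from $H$'' is not available for those constrained edges, so the ``finitely many, essentially tight, numeric inequalities'' you promise to verify are the whole difficulty, and none of them is verified.

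Finally, the small cases are much heavier than ``checking the few possible graphs $H$ directly'': the paper handles them (for $7\le n\le 13$, in fact for a strengthened statement) by a computer search over all graphs with the relevant number of missing edges, and its general argument is an induction on $n$ through a stronger theorem (packings with prescribed uncovered weight and no triangle of weight above $1/2$), not a perturbation of the uniform weighting. So your route is genuinely different in spirit and could conceivably be pushed through for large $n$ (it resembles known minimum-degree arguments for fractional decompositions), but as written it has a real gap: the correction $\Delta$ is never produced, and the regime where the hypothesis $e(H)\le n-4$ is sharp is exactly the regime your dilution heuristic does not cover.
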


		\Cref{thm:n-four} is tight in two ways: the complete graph on six vertices with two edges removed (intersecting or not) does not have a fractional triangle decomposition; and the graph on vertex set $[n]$ with non-edges $\{ xn : x \in \{4, \ldots, n\!-\!1\}\} \cup \{12\}$ is an $n$-vertex graph with $n-3$ non-edges that does not have a fractional triangle decomposition.

		Our main motivation for proving \Cref{thm:n-four} is our \cite{us1} proof that every $n$-vertex $2$-coloured complete graph has $n^2/12 + o(n^2)$ edge-disjoint monochromatic triangles, which confirms a conjecture of Erd\H{o}s \cite{erdos}. To prove the conjecture, we use a reduction to fractional monochromatic triangle packings, due to Haxell and R\"odl \cite{haxell-rodl}. Our proof there is inductive, and \Cref{thm:n-four} is a key ingredient in the induction step.

		A well-known conjecture of Nash-Williams \cite{nash-williams} asserts that every $n$-vertex graph $G$ with minimum degree at least $3n/4$, where $n$ is large and $G$ satisfies certain `divisibility conditions', has a triangle decomposition. While this conjecture is still open, significant progress towards it has been made. Recently, Delcourt and Postle \cite{delcourt-postle} showed that every $n$-vertex graph with minimum degree at least $0.83 n$ has a fractional triangle decomposition, improving on several previous results (see, e.g., \cite{gustavsson,yuster05,dukes,garaschuk,dross}).
		Combined with a result of Barber, K\"uhn, Lo and Osthus \cite{barber-et-al}, it follows that the statement obtained by replacing $3/4$ by $0.831 n$ in Nash-Williams's conjecture is true. 
		Delcourt and Postle's result (or any result about fractional triangle decompositions in graphs with large minimum degree) can be used to prove \Cref{thm:n-four} for sufficiently large $n$. 
		However, crucially, in \cite{us1} we need \Cref{thm:n-four} to hold for all $n \ge 7$, and we thus prove \Cref{thm:n-four} without relying on such results. 

		In fact, in \cite{us1} we use the following stronger version of \Cref{thm:n-four}.

		\begin{cor} \label{cor:n-four} 
			Let $G$ be a complete graph on $n \ge 7$ vertices, and let $\phi : E(G) \to [0,1]$ be such that $\sum_{e \in E(G)} \phi(e) \ge \binom{n}{2} - (n-4)$. 
			Then there is a fractional triangle packing $\omega$ in $G$ such that $\omega(e) = \phi(e)$ for every $e \in E(G)$.
		\end{cor}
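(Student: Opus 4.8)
The plan is to deduce \Cref{thm:n-four}'s corollary from \Cref{thm:n-four} itself by a short convexity argument. Write $\psi := \one_{E(G)} - \phi$ for the ``deficiency'' function; then $\psi(e) \in [0,1]$ for every edge $e$ and $\sum_{e} \psi(e) = \binom{n}{2} - \sum_{e} \phi(e) \le n-4$. The crucial reduction is to express $\psi$ as a convex combination of indicator vectors of small edge sets: if $\psi = \sum_i \lambda_i \one_{F_i}$ with $\lambda_i \ge 0$, $\sum_i \lambda_i = 1$, and $|F_i| \le n-4$ for each $i$, then for each $i$ the graph $G_i := G - F_i$ has $e(G_i) = \binom{n}{2} - |F_i| \ge \binom{n}{2} - (n-4)$, so by \Cref{thm:n-four} it admits a fractional triangle decomposition $\omega_i$. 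Regarding $\omega_i$ as a fractional triangle packing of $G$ (assigning weight $0$ to every triangle of $G$ that uses an edge of $F_i$), we have $\omega_i(e) = 1$ for $e \notin F_i$ and $\omega_i(e) = 0$ for $e \in F_i$, i.e.\ $\omega_i(e) = 1 - \one_{F_i}(e)$ for all $e \in E(G)$. Setting $\omega := \sum_i \lambda_i \omega_i$, all triangle weights of $\omega$ lie in $[0,1]$ (a convex combination of values in $[0,1]$), and $\omega(e) = \sum_i \lambda_i \omega_i(e) = 1 - \sum_i \lambda_i \one_{F_i}(e) = 1 - \psi(e) = \phi(e) \le 1$ for every $e$; hence $\omega$ is a fractional triangle packing of $G$ realising $\phi$, as required.

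The only substantive point is the decomposition of $\psi$, namely that every $x \in [0,1]^{E}$ with $\sum_{e} x_e \le k$, where $k$ is a nonnegative integer (here $k = n-4$), lies in the convex hull of $\{\one_F : F \subseteq E,\ |F| \le k\}$. This is the standard fact that the polytope $\{x \in [0,1]^{E} : \sum_{e} x_e \le k\}$ is integral, its vertices being precisely the $0/1$ vectors with at most $k$ ones. One can also prove it directly by induction on the number of non-integral coordinates of $x$: if $x$ has at least two non-integral coordinates, move $x$ along a line that increases one of them and decreases the other at the same rate until some coordinate reaches $0$ or $1$, exhibiting $x$ as a convex combination of two points with the same coordinate sum and strictly fewer non-integral coordinates; and if $x$ has a single non-integral coordinate $x_{e_0} = c \in (0,1)$, write $x = c\,x' + (1-c)\,x''$ where $x'$ and $x''$ agree with $x$ off $e_0$ and take the values $1$ and $0$ there respectively (both have integral coordinate sum, hence at most $k$). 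Applying this with $x = \psi$ and $k = n-4$ (using $0 \le \psi(e) \le 1$ and $\sum_e \psi(e) \le n-4$) gives the desired convex combination.

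I do not anticipate a genuine obstacle: granted \Cref{thm:n-four}, the corollary is essentially a repackaging, and the sole ingredient is the elementary polytope fact above. The main thing to be careful with is the bookkeeping in passing from a fractional triangle decomposition of $G_i = G - F_i$ to a fractional triangle packing of the complete graph $G$ — one must assign weight $0$ to every triangle through a deleted edge, which is exactly what makes the identity $\omega_i(e) = 1 - \one_{F_i}(e)$ hold on all of $E(G)$ and hence makes the convex combination collapse to $\phi$.
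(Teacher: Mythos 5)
Your proof is correct and follows essentially the same route as the paper: there, \Cref{cor:n-four} is deduced from \Cref{thm:n-four} via \Cref{lem:integer-to-fractional}, whose proof likewise writes the deficiency $1-\phi$ as an average of indicators of edge sets of size at most $n-4$ (via \Cref{claim:partition}) and averages the resulting fractional triangle decompositions of the complementary graphs. Your appeal to the integrality of the polytope $\bigl\{x \in [0,1]^{E} : \sum_e x_e \le k\bigr\}$ merely replaces the paper's explicit greedy partition for rational weights followed by a limiting argument for irrational ones, so this is a mild streamlining rather than a genuinely different method.
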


		In order to prove \Cref{thm:n-four}, we prove a stronger statement (see \Cref{thm:n-four-a}) by induction, constructing a suitable fractional triangle packing in $G$ using fractional triangle packings of certain graphs related to $G$ on $n-1$ and $n-2$ vertices. The induction base is proved by computer search.
		\Cref{cor:n-four} follows from \Cref{thm:n-four} via a reduction from weighted graphs to simple graphs (see \Cref{lem:integer-to-fractional}).

		\subsection*{Organisation of the paper}

			In \Cref{sec:prelims} we introduces some notation, mention a few preliminaries, and state \Cref{thm:n-four-a} -- a strengthening of \Cref{thm:n-four} which is more amenable to an inductive proof. In \Cref{sec:frac} we prove \Cref{lem:integer-to-fractional}, which will allow us to prove \Cref{cor:n-four} and will be handy for the proof of \Cref{thm:n-four-a}. In \Cref{sec:computer} we describe the algorithm used in our computer search, and explain how it proves \Cref{thm:n-four,thm:n-four-a} for small values of $n$. Finally, in \Cref{sec:proof} we complete the proof of \Cref{thm:n-four-a}.

	\section{Preliminaries} \label{sec:prelims}

		Recall that a \emph{fractional triangle packing} in $G$ is an assignment $\omega$ of weights in $[0, 1]$ to the triangles in $G$, such that the total weight on every edge of $G$ is at most $1$; i.e.\ $\sum_{w \in V(G)} \omega(uvw) \le 1$ for every edge $uv$ in $G$ (where $w(uvw) = 0$ whenever $uvw$ is not a triangle). For every edge $uv$ we define $\omega(uv) := \sum_{w \in V(G)} \omega(uvw)$. A \emph{fractional triangle decomposition} in a graph $G$ is a fractional triangle packing $\omega$ satisfying $\omega(e) = 1$ for every edge $e$ in $G$.
		
		The \emph{uncovered weight} in $\omega$ is the total uncovered edge-weight, namely $\sum_{e \in E(G)} (1 - \omega(e))$. (So a fractional triangle packing $\omega$ is a fractional triangle decomposition if and only if the uncovered weight in $\omega$ is $0$.)
		Given a graph $G$, the number of \emph{missing edges} in $G$ is the number of pairs of vertices that are not edges of $G$. 		

		In order to prove our main result, \Cref{thm:n-four}, we prove the following stronger result. We note that the lower bound on $n$ is tight, as there exists a graph on $10$ vertices with $10$ missing edges, that does not have a fractional triangle packing with uncovered weight at most $4$.

		\begin{thm} \label{thm:n-four-a}
			Let $G$ be a graph on $n$ vertices with at most $n - 4 + a$ missing edges, where $n \ge 11$ and $0 \le a \le 4$. 
			Then there is a fractional triangle packing in $G$ with total uncovered weight at most $a$, such that every triangle has weight at most $1/2$.
		\end{thm}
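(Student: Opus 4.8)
The plan is to prove the theorem by induction on $n$, the base cases (small $n$) being handled by the computer search of \Cref{sec:computer}; since the reduction below decreases $n$ by one or two, only finitely many base cases are needed. Fix $G$ on $n$ vertices with at most $n-4+a$ missing edges, with $n$ large. The strategy is to peel off a single, carefully chosen vertex $v$, apply the induction hypothesis to $G-v$ (a graph on $n-1$ vertices), and then modify the resulting packing by adding triangles through $v$ that cover all of $v$'s edges without disturbing anything else.

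Here is the modification. Suppose $\omega_0$ is a fractional triangle packing of $G-v$ with all weights at most $1/2$, and let $\rho \le \omega_0$ be a fractional sub-collection supported on triangles lying entirely inside $N(v)$. Define $\omega := \omega_0 - \rho + \rho'$, where $\rho'$ assigns to each triangle $vxy$ (with $x,y \in N(v)$ and $xy \in E(G)$) the weight $\sum_z \rho(xyz)$, summed over $z$ with $xyz$ a triangle of $G[N(v)]$. One checks directly that: every edge inside $G-v$ retains exactly its $\omega_0$-weight (the $-\rho$ and $+\rho'$ contributions cancel edge by edge); every triangle weight stays at most $1/2$, using $\rho \le \omega_0$ on triangles of $G[N(v)]$ and, for the new triangles $vxy$, the requirement $\sum_z\rho(xyz) \le 1/2$; and $\omega(vx) = 2\deg_\rho(x)$ for every $x \in N(v)$, where $\deg_\rho(x)$ is the total $\rho$-weight on triangles through $x$. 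Consequently the uncovered weight of $\omega$ in $G$ equals that of $\omega_0$ in $G-v$, provided $\deg_\rho(x) = 1/2$ for all $x \in N(v)$. So it suffices to choose $v$ with $\codeg(v) \ge 1$ -- then $G-v$ has at most $(n-1)-4+a$ missing edges, and the induction hypothesis supplies a suitable $\omega_0$ with uncovered weight at most $a$ -- and then to find $\rho \le \omega_0$, supported inside $N(v)$, with $\deg_\rho \equiv 1/2$ on $N(v)$ and per-pair weights at most $1/2$.

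Such a $\rho$ can be extracted whenever $v$ is chosen so that $\omega_0$ has large triangle-degree at every vertex of $N(v)$ within $G[N(v)]$. Since $\omega_0$ nearly decomposes $G-v$, the triangle-degree of $x$ in $\omega_0$ is roughly $\deg_{G-v}(x)/2$, and at most $\codeg(v)$ units of it can sit on triangles that leave $N(v)$; hence if $v$ and all of its neighbours have small co-degree, every $x\in N(v)$ has $\omega_0$-triangle-degree inside $G[N(v)]$ far exceeding $1/2$. A short lemma -- proved by scaling $\omega_0$ down inside $G[N(v)]$ and then correcting the vertex-degrees to exactly $1/2$ using the abundant triangles of the near-complete graph $G[N(v)]$, keeping all weights tiny so the per-pair bound $1/2$ holds -- then yields the desired $\rho$.

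The main obstacle is the vertex selection, together with the degenerate configurations it leaves. Since the number of missing edges may be as large as $n$, the missing-edge graph can contain a vertex $w$ of very large co-degree, equivalently of very small degree in $G$; then $N(w)$ is small, $\omega_0$-triangle-degrees inside it need not exceed $1/2$, and the argument above fails. These cases call for a separate analysis of the missing-edge graph: when some $w$ has large co-degree, $G - w$ is much closer to complete, so one can recurse on $G-w$ with a strictly smaller parameter $a'$ and spend the freed budget either leaving $w$'s few edges (partially) uncovered or, when $G[N(w)]$ is rich enough, covering them via an auxiliary packing -- and it is here that a recursion on $G[N(w)]$, a graph on at most $n-2$ vertices, is used. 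The bulk of the remaining work is verifying, case by case, that the graph one recurses on has its missing-edge count within the permitted range and that the uncovered-weight and weight-at-most-$1/2$ budgets add up; the tightest point is $a=0$, where no uncovered weight is allowed and the coordination between the triangles through the removed vertex and the (capacity-reduced) packing of the rest is most delicate.
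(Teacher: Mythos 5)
Your single-vertex gadget is sound as algebra: replacing weight $\rho(xyz)$ on a triangle inside $N(v)$ by equal weight on $vxy$, $vxz$, $vyz$ preserves the coverage of every edge of $G-v$ and gives $\omega(vx)=2\deg_\rho(x)$, so the whole proof reduces to producing $\rho\le\omega_0$ supported on triangles of $G[N(v)]$ with $\deg_\rho(x)=\tfrac12$ \emph{exactly} for every $x\in N(v)$ (and pair-sums at most $\tfrac12$). But this extraction lemma is precisely where the proposal has a genuine gap, and the sketch offered for it does not work as stated. The ``scale down and correct'' step must itself be carried out underneath the capacity $\omega_0$: you may only move weight that $\omega_0$ actually places on triangles inside $N(v)$, so ``the abundant triangles of the near-complete graph $G[N(v)]$'' are not freely available, and after scaling you are left with the same kind of exact-degree feasibility problem (now with non-uniform targets) that you started with. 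Exact simultaneous degree constraints in a capacitated triangle hypergraph are not implied by every vertex having large capacity; they need a Hall/LP-duality argument, and in the tight case $a=0$ there is no slack anywhere -- $\omega_0$ is a genuine fractional decomposition, every $\deg_\rho(x)$ must hit $\tfrac12$ on the nose, and vertices of $N(v)$ with large co-degree may have triangle capacity inside $G[N(v)]$ barely above (or below) $\tfrac12$, with the no-heavy-triangle cap further constraining which triangles can carry the required weight. Nothing in the proposal establishes that a vertex $v$ exists for which this subproblem is feasible.

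The second gap is that the ``degenerate configurations'' you defer to a separate analysis are not a side case: they are where essentially all of the paper's work lies, and your sketch of them does not add up quantitatively. If $w$ has co-degree around $n/3$, then $G-w$ does become easier, but the roughly $2n/3$ edges at $w$ must still be covered almost exactly (the total budget is $a\le 4$), so ``leaving $w$'s few edges uncovered'' is impossible and ``covering them via an auxiliary packing of $G[N(w)]$'' is just the original problem again. The paper handles this regime (Case 1, $\codeg(u)>(n+a)/3$) by covering the edges at $u$ with weight-$\tfrac12$ triangles along a Hamilton cycle of $G[N(u)]$ (\Cref{cor:hamilton}) and then invoking the induction hypothesis for a \emph{weighted} graph via \Cref{lem:integer-to-fractional}; and when all co-degrees are small it does not peel a single vertex at all, but averages over many vertex-deleted subgraphs, splitting according to the number $m$ of full-degree vertices, with symmetrised packings $\omega_z$, a fractional Hall matching in an auxiliary bipartite graph, and a separate, more delicate treatment of $a=4$. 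So your outline identifies the right difficulties but leaves unproved exactly the two steps (the exact-degree extraction, and the large-co-degree/full-degree-vertex cases) that constitute the actual proof; as written it is not a proof of \Cref{thm:n-four-a}.
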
	

		We prove \Cref{thm:n-four,thm:n-four-a} for $n \le 13$ by computer. More precisely, we prove the following lemma. For a description of our algorithm, and a proof of this lemma using the outcome of the computer search, see \Cref{sec:computer}. The certificates relevant to the computer search can be found \href{https://liveuclac-my.sharepoint.com/:f:/g/personal/ucahsle_ucl_ac_uk/EmF10ks9B91ApHhBWyKIu0oBzCRu6j3m-lcfvMgrqLq5hA?e=5\%3aJtRVTt&at=9}{\textcolor{blue}{here}}.

		\begin{lem} \label{lem:computer}
			Let $G$ be an $n$-vertex graph with $n - 4 + a$ missing edges, where $a \in \{0, \ldots, 4\}$. 
			\begin{itemize} 
				\item
					If $n \in \{11, 12, 13\}$ and $a \in \{0, \ldots, 4\}$, then $G$ has a fractional triangle packing with uncovered weight at most $a$, such that every triangle in $G$ has weight at most $1/2$.
				\item
					If $n \in \{7, \ldots, 10\}$ and $a = 0$, then $G$ has a fractional triangle decomposition.
			\end{itemize}
		\end{lem}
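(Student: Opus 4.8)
The statement is a finite assertion: for each admissible pair $(n,a)$ there are, up to isomorphism, only finitely many $n$-vertex graphs with exactly $n-4+a$ missing edges. The plan is therefore to verify it by an exhaustive computer search which, for every such graph, outputs an explicit rational fractional triangle packing that can be re-checked independently of any optimisation software.

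First, reformulate the desired conclusion as a linear program. Fix a graph $G$ and introduce a variable $\omega(T)$ for each triangle $T$ of $G$. The fractional triangle packings of $G$ with every triangle weight at most $1/2$ are exactly the points satisfying $0 \le \omega(T) \le 1/2$ for all $T$ and $\sum_{T \ni e} \omega(T) \le 1$ for all $e \in E(G)$, and the uncovered weight of such an $\omega$ equals $e(G) - 3\sum_T \omega(T)$ (each triangle lies on three edges). Hence $G$ has a fractional triangle packing with uncovered weight at most $a$ and triangle weights at most $1/2$ if and only if the linear program $\max \sum_T \omega(T)$ over this polytope has optimum at least $(e(G)-a)/3$; for the case $n \le 10$, $a=0$ one instead simply checks feasibility of the system $\omega(T) \ge 0$ for all $T$, $\sum_{T \ni e} \omega(T) = 1$ for all $e$ (no upper bound on triangle weights, since no $1/2$-bound is required there). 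In both cases it suffices to exhibit one feasible $\omega$ meeting the bound, so the search only needs to record primal solutions; as all data are rational, these can be taken rational and verified by exact arithmetic.

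Second, cut down the family of graphs to be examined. Since $G$ has at most $n$ missing edges, its complement $H = \bar{G}$ has at most $n$ edges and hence a constrained structure (bounded number of non-isolated vertices, low average degree). After optionally discarding graphs whose complement contains a vertex of degree at most $1$ — such a vertex is adjacent in $G$ to all but at most one other vertex and can be deleted and later reinserted, so these graphs are handled by the inductive step rather than by computer — one is left with complements of small, bounded size. Enumerate one representative of each isomorphism class (e.g.\ using nauty's \texttt{geng}, filtered to exactly $n-4+a$ edges), build the linear program above for each, solve it, round the solution to a nearby exact rational vertex determined by the active constraints, and store it. A final verification pass then reads every stored certificate and checks that its support consists of genuine triangles of the associated graph, that each weight lies in $[0,1/2]$ (for $n \ge 11$), that every edge has total weight at most $1$ (or exactly $1$ for $n \le 10$), and that the uncovered weight is at most $a$; passing this pass for all enumerated graphs proves the lemma, and these certificates are the ones linked in the paper.

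The main obstacle is controlling the number of graphs to be checked: without reductions, the count of, say, $13$-vertex graphs with $13$ missing edges (and the analogous counts for the other parameters) is large enough that running a linear program on each is at the edge of practicality, so most of the work lies in the isomorphism-free enumeration together with the degree- and symmetry-based pruning, and in arguing that this pruning is sound — i.e.\ that every graph not examined is genuinely covered by the inductive argument of \Cref{sec:proof}. A secondary technical point is the passage from the floating-point output of the linear-programming solver to a certified rational solution; this is routine (solve, guess a rational vertex from the active constraints, verify exactly) but must be carried through for every instance so that the final proof depends only on the correctness of the graph generator and of the elementary arithmetic checker.
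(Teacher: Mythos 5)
Your proposal is, in substance, the same as the paper's: both proofs are exhaustive computer verifications that enumerate, up to isomorphism, every $n$-vertex graph with exactly $n-4+a$ missing edges for the relevant $(n,a)$, solve a linear program per graph (your reformulation of the uncovered weight as $e(G)-3\sum_T\omega(T)$ and the resulting optimum threshold $(e(G)-a)/3$ is correct), and certify the result by exact rational solutions that an independent checker can re-verify -- the paper does the rounding via continued-fraction approximation, you via the active constraints, which is the same idea. The only real difference is the enumeration machinery: the paper builds a bespoke digraph on degree sequences and grows the graphs along it (using McKay--Piperno for isomorphism rejection), whereas you propose off-the-shelf isomorphism-free generation of the complements; mathematically this is immaterial, since in both cases what must be argued is that the enumeration is exhaustive.

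One concrete caveat: the ``optional'' pruning you suggest -- discarding graphs whose complement has a vertex of degree at most $1$ on the grounds that they are ``handled by the inductive step'' -- cannot be used as stated. The induction of \Cref{sec:proof} only operates for $n\ge 14$ and takes \Cref{lem:computer}, for \emph{all} graphs on $11$--$13$ vertices, as its base case; moreover, deleting a vertex $u$ of small non-degree leaves a graph on $n-1$ vertices whose missing-edge count corresponds to $a+1$ (outside the lemma's range when $n-1=10$), and reinserting $u$ requires exactly the kind of argument (Hamilton cycle in $N(u)$, weight transfer) that the paper only carries out for $n\ge 14$. Relying on it for $n\in\{11,12,13\}$ would therefore be circular. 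Since the unpruned enumeration for these parameters is entirely feasible (the complements have at most $13$ edges), the pruning should simply be dropped, after which your argument matches the paper's.
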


		It will be convenient for us to assume that $G$ has \emph{exactly} $n-4+a$ missing edges, for some $a \in \{0, \ldots, 4\}$. To do so, we use the following reduction. Note that \Cref{thm:n-four} follows directly from \Cref{thm:n-four-a} and \Cref{lem:computer,lem:reduction-n-4-non-edges}.

		\begin{lem} \label{lem:reduction-n-4-non-edges}
			Suppose that every graph on $n$ vertices with exactly $m < \binom{n}{2}$ missing edges has a fractional triangle decomposition, such that every triangle has weight at most $\beta \ge 1/3$. Then the same holds for graphs with \emph{at most} $m$ missing edges.
		\end{lem}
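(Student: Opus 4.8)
The plan is to prove the statement by downward induction on the number of missing edges. Write $P(j)$ for the assertion that every $n$-vertex graph with exactly $j$ missing edges has a fractional triangle decomposition in which every triangle has weight at most $\beta$. The hypothesis is $P(m)$ and the conclusion is that $P(j)$ holds for all $j \le m$, so it suffices to prove the implication $P(j+1) \Rightarrow P(j)$ for each $j < m$.

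The engine is the LP-duality (Farkas) characterisation of fractional triangle decompositions with bounded weights: a graph $H$ admits such a decomposition if and only if
\[
\sum_{e\in E(H)}y_e \;+\; \beta\sum_{T}\Bigl(\,\sum_{e\in E(T)}y_e\Bigr)^{\!-}\;\ge\;0 \qquad\text{for every }y\in\mathbb{R}^{E(H)},
\]
where the inner sum runs over the triangles $T$ of $H$ and $x^{-}:=\max(0,-x)$. I would derive this by applying Farkas' lemma to the linear feasibility system $\{\,\omega\ge 0,\ \omega\le\beta\mathbf 1,\ N\omega=\mathbf 1\,\}$, with $N$ the edge--triangle incidence matrix; the dual multipliers for the two box constraints can be solved for and eliminated, leaving exactly the displayed inequality in the combined multiplier $y$ of the equality constraints. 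Getting this set-up right (and noting it is genuinely an LP) is the one step to do with care.

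Now assume $P(j+1)$ and suppose, for contradiction, that some $G$ with exactly $j$ missing edges fails $P(j)$; fix $y\in\mathbb{R}^{E(G)}$ with $S(G,y):=\sum_{e\in E(G)}y_e+\beta\sum_{T}\bigl(\sum_{e\in E(T)}y_e\bigr)^{-}<0$. I split into two cases. If $y_f\ge 0$ for some edge $f$ of $G$, delete $f$: the triangles of $G-f$ are exactly the triangles of $G$ avoiding $f$, so a one-line computation gives $S\bigl(G-f,\ y|_{E(G-f)}\bigr)=S(G,y)-y_f-\beta\sum_{T\ni f}\bigl(\sum_{e\in E(T)}y_e\bigr)^{-}<0$, and since $G-f$ has exactly $j+1$ missing edges this contradicts $P(j+1)$. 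Otherwise $y_f<0$ for every edge $f$ of $G$; then $\sum_{e\in E(T)}y_e<0$ for every triangle $T$, so $S(G,y)=\sum_{e\in E(G)}y_e\bigl(1-\beta\,t_G(e)\bigr)$, where $t_G(e)$ is the number of triangles of $G$ through $e$. But $P(j+1)$ forces every edge of $G$ to lie in at least $\lceil 1/\beta\rceil$ triangles: since $j<m<\binom{n}{2}$ the graph $G$ has at least two edges, so for a given edge $e$ we may delete some other edge to obtain a graph with $j+1$ missing edges that still contains $e$, and its decomposition covers $e$ to total weight $1$ using triangles of weight at most $\beta$. Hence $1-\beta\,t_G(e)\le 0$ for every $e$, each term of $S(G,y)$ is $\ge 0$, and $S(G,y)\ge 0$, a contradiction. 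This completes the induction, and hence the proof.

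I expect the main obstacle to be conceptual rather than technical: a direct attempt to add a missing edge back to $G$ and repair a decomposition of $G$ minus that edge runs into cascading corrections --- no nonnegative adjustment can cover a new edge without overloading its two neighbours, and one has no control over which triangles a given decomposition uses. Passing to LP duality dissolves this, because the effect of removing an edge on a dual certificate is completely explicit. Within the duality argument the only subtlety is Case 2, where one has to observe that $P(j+1)$ already guarantees $t_G(e)\ge 1/\beta$ for every edge, so the all-negative test vectors cannot be dangerous.
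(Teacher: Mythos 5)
Your proposal is correct, but it takes a genuinely different route from the paper. The paper's proof is a three-line constructive argument: by downward induction on the number of missing edges, it finds a triangle $uvw$ in $G$ (its existence is extracted from the induction hypothesis applied to $G$ minus an edge), takes the three decompositions of $G \setminus \{uv\}$, $G \setminus \{uw\}$, $G \setminus \{vw\}$ guaranteed by the hypothesis, averages them, and adds weight $1/3$ on $uvw$; each edge of the triangle then receives $2/3 + 1/3 = 1$, and the bound $\beta \ge 1/3$ is exactly what makes the added triangle legal. Your argument replaces this construction with the Farkas characterisation of feasibility of $\{N\omega = \mathbf 1,\ 0 \le \omega \le \beta\mathbf 1\}$ and an analysis of a violating dual vector $y$: if some $y_f \ge 0$ you delete $f$ and inherit a violated certificate for a graph with one more missing edge, and if all $y_e < 0$ you use the hypothesis (applied to $G$ minus some other edge) to show every edge lies in at least $1/\beta$ triangles, forcing $\sum_e y_e(1 - \beta t_G(e)) \ge 0$; both cases check out, including the counting that $G$ has at least two edges because $j < m < \binom{n}{2}$. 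What each approach buys: the paper's argument is elementary, short, and self-contained, at the price of genuinely using $\beta \ge 1/3$; your duality argument is heavier machinery but never uses $\beta \ge 1/3$ (only $\beta > 0$), so it proves a slightly stronger statement, and it sidesteps the "cascading corrections" issue you describe — though note the paper also sidesteps it, not by adding an edge back, but by deleting all three edges of a triangle and averaging.
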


		\begin{proof} 
			We prove by induction that every $n$-vertex graph with $k$ missing edges, where $k \le m$, has a fractional triangle decomposition, such that every triangle has weight at most $\beta$. The case $k = m$ holds by assumption. Now suppose that the statement holds for $k$ with $1 \le k \le m$. Let $G$ be an $n$-vertex graph with $k-1$ missing edges. Note that $G$ has a triangle. Indeed, by assumption on $k$, the graph obtained by removing any edge from $G$ has a fractional triangle decomposition; in particular, it contains a triangle (as $G$ has at least one edge). Let $uvw$ be a triangle in $G$. By assumption on $k$,  each of the graphs $G \setminus \{uv\}$, $G \setminus \{uw\}$ and $G \setminus \{vw\}$ has a fractional triangle decomposition, such that the weight of each triangle is at most $\beta$. Taking the average of these three packings, and additionally assigning weight $1/3$ to $uvw$, we obtain a triangle decomposition in $G$ with no heavy triangles, as required.
		\end{proof}

		A \emph{weighted graph} is a pair $(G, \phi)$ where $G$ is a graph and $\phi$ is an assignment of weights in $[0, 1]$ to the edges of $G$. The \emph{missing weight} in $(G, \phi)$ is $\sum_{e \in V(G)^{(2)}} (1 - \phi(e))$, where $\phi(e) = 0$ if $e$ is not an edge of $G$. A fractional triangle packing $\omega$ in $(G, \phi)$ is a fractional triangle packing $\omega$ in $G$ such that $\omega(e) \le \phi(e)$ for every edge $e$ in $G$. The \emph{uncovered weight} in $\omega$ is, as above, the total uncovered edge-weight, namely $\sum_{e \in E(G)}(\phi(e) - \omega(e))$.
		
		Our proof of \Cref{thm:n-four-a} is inductive. When applying the induction step, it will be useful for us to have a version of \Cref{thm:n-four-a} for weighted graphs. This can be achieved by the following lemma.

		\begin{lem} \label{lem:integer-to-fractional}
			Suppose that every graph on $n$ vertices with at most $m$ missing edges has a fractional triangle packing with total uncovered weight at most $a$, such that every triangle has weight at most $\beta$. Then every \emph{weighted} graph with missing weight at most $a$, has a fractional triangle packing $\omega$ with uncovered weight at most $a$ such that every triangle has weight at most $\beta$.
		\end{lem}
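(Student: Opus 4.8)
The plan is to deduce this weighted statement from the unweighted hypothesis by writing the given edge-weighting as a convex combination of indicator functions of simple graphs, each with at most $m$ missing edges, and then forming the corresponding convex combination of the fractional triangle packings that the hypothesis supplies. Let $(G,\phi)$ be a weighted graph on $n$ vertices with missing weight at most $m$. We may assume that $G$ is the complete graph $K_n$ and that $\phi\colon V(K_n)^{(2)}\to[0,1]$: a non-edge of $G$ is just an edge of weight $0$, and a fractional triangle packing never assigns positive weight to a triangle through an edge of weight $0$ (as $\omega(e)\le\phi(e)$), so a packing of $(K_n,\phi)$ restricts to one of $(G,\phi)$ with the same triangle weights and the same uncovered weight. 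We may also assume that $m$ is a non-negative integer, as the number of missing edges of a graph is always an integer. With these conventions, the condition that $(K_n,\phi)$ has missing weight at most $m$ says exactly that $\phi$ lies in the polytope
\[
	Q \;:=\; \Bigl\{\, \psi\in[0,1]^{V(K_n)^{(2)}} \;:\; \textstyle\sum_{e}\psi(e)\ge\binom{n}{2}-m \,\Bigr\}.
\]

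The key step is to describe the vertices of $Q$. A vertex is a point at which $\binom{n}{2}$ of the inequalities defining $Q$ hold with equality. If $\sum_{e}\psi(e)\ge\binom{n}{2}-m$ is strict there, then $\binom{n}{2}$ of the box constraints $0\le\psi(e)\le 1$ are tight, so every coordinate of $\psi$ lies in $\{0,1\}$. If instead it holds with equality, then $\binom{n}{2}-1$ box constraints are tight, so all but one coordinate lies in $\{0,1\}$, and the remaining coordinate equals $\binom{n}{2}-m$ minus an integer; being a number in $[0,1]$ and, since $m\in\Z$, an integer, it too lies in $\{0,1\}$. Hence every vertex of $Q$ is the indicator $\one_{H}$ of a subgraph $H\subseteq K_n$, and $\one_{H}\in Q$ precisely when $H$ has at most $m$ missing edges.

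Since a polytope is the convex hull of its vertices, we may write $\phi=\sum_{i}\lambda_i\one_{H_i}$ with $\lambda_i\ge 0$, $\sum_i\lambda_i=1$, and each $H_i$ having at most $m$ missing edges. By hypothesis, each $H_i$ carries a fractional triangle packing $\omega_i$ with uncovered weight at most $a$ in which every triangle has weight at most $\beta$. Put $\omega:=\sum_i\lambda_i\omega_i$. Then every triangle $T$ has $\omega(T)=\sum_i\lambda_i\omega_i(T)\le\beta$; every edge $e$ has $\omega(e)=\sum_i\lambda_i\omega_i(e)\le\sum_i\lambda_i\one_{H_i}(e)=\phi(e)$ (using $\omega_i(e)\le\one_{H_i}(e)$), so $\omega$ is a fractional triangle packing in $(K_n,\phi)$, hence in $(G,\phi)$; and its uncovered weight is $\sum_{e}\bigl(\phi(e)-\omega(e)\bigr)=\sum_i\lambda_i\sum_{e}\bigl(\one_{H_i}(e)-\omega_i(e)\bigr)=\sum_i\lambda_i\cdot(\text{uncovered weight of }\omega_i)\le a$, as needed. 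Everything here is routine except the vertex description of $Q$, which is the one place that requires an argument and is also where integrality of $m$ is used.
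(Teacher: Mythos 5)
Your argument is correct, and it rests on the same underlying decomposition as the paper's proof: write $\phi=\sum_i\lambda_i\one_{H_i}$ as a convex combination of indicators of graphs with at most $m$ missing edges, apply the hypothesis to each $H_i$, and average the resulting packings. Where you genuinely differ is in how that decomposition is obtained. The paper first restricts to rational weightings with common denominator $r$, proves an explicit partition claim (by induction on $r$, peeling off one set of at most $m$ ``non-edges'' at a time) to write $\phi=\frac1r\sum_{i\in[r]}\one_{G_i}$, and then treats irrational weightings by a separate limiting argument; you instead observe that the polytope $Q=\{\psi\in[0,1]^{V(K_n)^{(2)}}:\sum_e\psi(e)\ge\binom n2-m\}$ has only $0/1$ vertices and take a convex-hull representation. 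Your route is shorter and handles all real weightings uniformly, at the price of a (correct, but slightly less elementary) polyhedral vertex computation, whereas the paper's version is fully constructive. Your explicit reduction to integer $m$ is the right move -- the integral-vertex claim, and indeed any such convex decomposition, fails for non-integer $m$ at the boundary -- and the paper's partition claim implicitly assumes the same; this is harmless since the lemma is only ever invoked with integer $m$. (Note also that ``missing weight at most $a$'' in the statement is evidently a typo for ``at most $m$'', which is how you read it.)
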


		Note that \Cref{cor:n-four} follows immediately from \Cref{thm:n-four} and \Cref{lem:integer-to-fractional}.

		In our proof of \Cref{thm:n-four-a} we make use of the following corollary of Ore's theorem \cite{ore}, which asserts that if a graph $G$ on $n \ge 3$ vertices satisfies $d(u) + d(w) \ge n$ for every two non-adjacent vertices $u$ and $w$, then $G$ has a Hamilton cycle.

		\begin{cor} \label{cor:hamilton}
			Let $G$ be a graph on $n \ge 3$ vertices with at most $n - 3$ missing edges. Then $G$ has a Hamilton cycle.
		\end{cor}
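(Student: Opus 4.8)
The plan is to deduce this from Ore's theorem by verifying the degree-sum hypothesis. Let $G$ be a graph on $n \ge 3$ vertices with at most $n-3$ missing edges, and let $u$ and $w$ be two non-adjacent vertices. I would like to show $d(u) + d(w) \ge n$.

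First I would set up the counting. Every vertex $x \notin \{u, w\}$ that is non-adjacent to $u$ or non-adjacent to $w$ accounts for at least one missing edge incident to $u$ or $w$, and the pair $uw$ itself is a missing edge. Writing $k$ for the number of vertices in $V(G) \setminus \{u, w\}$ that fail to be adjacent to at least one of $u, w$, we get that the number of missing edges incident to $\{u, w\}$ is at least $k + 1$ (the $+1$ from the non-edge $uw$), and hence $n - 3 \ge k + 1$, i.e.\ $k \le n - 4$. Consequently, at least $(n-2) - k \ge 2$ vertices $x \notin \{u, w\}$ are adjacent to \emph{both} $u$ and $w$.

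Now I would convert this into the degree bound. Each of the $n - 2$ vertices in $V(G) \setminus \{u, w\}$ contributes, via its adjacencies to $u$ and to $w$, a total of either $2$ (if adjacent to both), $1$ (if adjacent to exactly one), or $0$ (if adjacent to neither) to the quantity $|N(u) \setminus \{w\}| + |N(w) \setminus \{u\}| = d(u) + d(w)$ (using that $u \not\sim w$, so neither counts in the other's neighbourhood). The total contribution is therefore at least $2 \cdot \big((n-2) - k\big) + 0 \cdot k$, but a cleaner bound is to note that the deficiency $2(n-2) - (d(u)+d(w))$ equals the number of missing edges between $\{u,w\}$ and the rest, which is at most $(n-3) - 1 = n - 4$ (subtracting the non-edge $uw$). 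Hence $d(u) + d(w) \ge 2(n-2) - (n-4) = n$.

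By Ore's theorem, $G$ has a Hamilton cycle, which is the claim. The only mild subtlety to get right is the bookkeeping of the non-edge $uw$: it is counted among the at most $n-3$ missing edges of $G$ but is not one of the edges ``between $\{u,w\}$ and $V(G)\setminus\{u,w\}$'', so it must be subtracted once; after that the inequality is immediate, and there is no real obstacle. (One should also note $n \ge 3$ is exactly the hypothesis Ore's theorem needs; for $n = 3$ the bound $n - 3 = 0$ forces $G = K_3$, consistent with the conclusion.)
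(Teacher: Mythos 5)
Your proof is correct and follows the same route the paper intends: the corollary is stated immediately after Ore's theorem precisely because the degree-sum condition follows from the count you give ($d(u)+d(w) = 2(n-2)$ minus the missing edges between $\{u,w\}$ and the rest, which number at most $(n-3)-1$ once the non-edge $uw$ is accounted for). The bookkeeping of the pair $uw$ is handled correctly, so there is nothing to add.
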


		A \emph{heavy triangle} in a fractional triangle packing $\omega$ is a triangle $T$ with $\omega(T) > 1/2$. Given a graph $G$ on $n$ vertices and a vertex $u$ in $G$, we denote the degree of $u$ by $d_G(u)$ and its non-degree (namely, the number of non-edges incident with $u$) by $\codeg_G(u)$; so $\codeg_G(u) = n - 1 - d_G(u)$. When $G$ is clear from the context, we omit the subscript $G$.

	\section{Fractional triangle packings in weighted graphs} \label{sec:frac}

		In this section we prove \Cref{lem:integer-to-fractional}, which reduces the problem of finding large fractional packings in weighted graphs, to finding such packings in unweighted graphs.

		\begin{proof}[Proof of \Cref{lem:integer-to-fractional}]
			Let $(G, \phi)$ be a weighted graph as in the statement of the claim.
			Suppose first that $\phi(e)$ is rational for every edge $e$, and let $r$ be such that $\phi(e) r$ is integer for every edge $e$. We make use of the following claim.

			\begin{claim} \label{claim:partition}
				Suppose that $d_1, \ldots, d_N \in \{0, \ldots, r\}$ satisfy $d_1 + \ldots + d_N \le r \cdot m$.
				Then there exist subsets $S_1, \ldots, S_r \subseteq [N]$ of size at most $m$ such that every $i \in [N]$ appears in exactly $d_i$ sets $S_j$ with $j \in [r]$.
			\end{claim}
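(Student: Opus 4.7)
My plan is to prove the claim by induction on $r$. The base case $r = 1$ is immediate: since each $d_i \in \{0,1\}$ and $\sum_i d_i \le m$, the single set $S_1 = \{i : d_i = 1\}$ has size at most $m$ and contains each $i$ exactly $d_i$ times.

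For the inductive step, assume the statement holds for $r - 1$. The strategy is to pick a single set $S_r \subseteq [N]$ with $|S_r| \le m$ and then apply the induction hypothesis to the residual sequence $d_i' := d_i - \one[i \in S_r]$. For the hypothesis to apply with parameter $r-1$, I need $d_i' \in \{0, \ldots, r-1\}$ for every $i$, and $\sum_i d_i' \le (r-1)m$. The constraint $d_i' \le r-1$ forces $S_r$ to contain the set $A := \{i : d_i = r\}$, and $S_r$ must clearly avoid any $i$ with $d_i = 0$, so $S_r \subseteq T := \{i : d_i \ge 1\}$. The residual sum constraint rearranges to $|S_r| \ge \sum_i d_i - (r-1)m$. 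So the plan is to pick $S_r$ with $A \subseteq S_r \subseteq T$ of any cardinality in the interval $[\max(|A|, \sum_i d_i - (r-1)m), \, \min(|T|, m)]$.

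The main point to verify is that this interval is non-empty. The inequalities $|A| \le m$ and $\sum_i d_i - (r-1)m \le m$ both follow immediately from $\sum_i d_i \le rm$ (for the first, use $r|A| \le \sum_i d_i$), so the upper endpoint $m$ dominates both lower endpoints. The only non-trivial requirement is $|T| \ge \sum_i d_i - (r-1)m$: since each $d_i \le r$ we have $|T| \ge \sum_i d_i / r$, and a one-line rearrangement of $\sum_i d_i \le rm$ shows $\sum_i d_i / r \ge \sum_i d_i - (r-1)m$. Having chosen such an $S_r$, applying the induction hypothesis to $(d_1', \ldots, d_N')$ produces sets $S_1, \ldots, S_{r-1}$ whose counts combine with $S_r$ to give each $i$ exactly $d_i$ appearances, as required.

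I do not anticipate any real obstacle beyond this counting check: the claim is essentially a discrete load-balancing fact, and one could alternatively derive it from the Gale--Ryser theorem applied to a bipartite degree sequence, or from a simple max-flow argument on the bipartite graph between $[N]$ (with demands $d_i$) and $[r]$ (with capacities $m$). The direct induction above seems the cleanest route given the elementary nature of the statement.
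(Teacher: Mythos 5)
Your proof is correct and follows essentially the same route as the paper: induction on $r$, peeling off a single set $S_r$ of size at most $m$ that contains every index with $d_i = r$ and only indices with $d_i \ge 1$, then applying the hypothesis to the decremented sequence. The only cosmetic difference is that the paper sorts the $d_i$ and takes $S_r$ to be the (at most $m$) largest positive values, verifying the residual-sum bound by a short case analysis, whereas you argue more flexibly that the admissible cardinalities for $S_r$ form a non-empty interval; both verifications reduce to the same counting inequalities.
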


			\begin{proof} 
				We prove the claim by induction on $r$.
				If $r = 0$, the statement holds trivially.
				Suppose that $r \ge 1$, and that the statement holds for $r - 1$.
				Without loss of generality, suppose that $d_1 \ge \ldots \ge d_N$. 
				Let $S_r$ be the set of indices $i \in [m]$ with $d_i \ge 1$.
				Define
				\begin{equation*}
					d_i' = \left\{
						\begin{array}{ll}
							d_i - 1 & i \in S_r \\
							d_i & \text{otherwise.}
						\end{array}
					\right.
				\end{equation*}
				Note that $d_i' \le r-1$ for every $i \in [N]$. Indeed, otherwise, $d_1, \ldots, d_{m+1} \ge r$, contradicting the assumption that $\sum_{i \in [N]} d_i \le r \cdot m$. Moreover, $\sum_{i \in [N]} d_i' \le (r-1)m$. Indeed, if $|S_r| = m$ then $\sum_{i \in [N]} d_i' = \sum_{i \in [N]} d_i - m \le (r-1)m$; and if $|S_r| < m$ then $d_m' = \ldots = d_N' = 0$, so $\sum_{i \in [N]} d_i' = \sum_{i \in [m-1]}d_i' < (r-1)m$.
				It follows that, by induction on $r$, there exist sets $S_1, \ldots, S_{r-1} \subseteq [N]$ of size at most $m$, such that every $i \in [N]$ is in exactly $d_i'$ sets $S_j$ with $j \in [r-1]$. The sets $S_1, \ldots, S_r$ satisfy the requirements for $d_1, \ldots, d_N$.
			\end{proof}

			Note that $(1 - \phi(e)) r \in \{0, \ldots, r\}$ for every edge $e$, and $\sum_{e \in E(G)} (1 - \phi(e))r \le r \cdot m$. Thus, by \Cref{claim:partition}, there exist sets $S_1, \ldots, S_r \subseteq E(G)$ of size at most $m$, such that every $e \in E(G)$ is in exactly $(1 - \phi(e))r$ sets $S_i$. Let $G_i$ be the graph on vertex set $V(G)$ with non-edges $S_i$. Then $G_i$ is a graph on $n$ vertices with at most $m$ non-edges, so by assumption there is a fractional triangle packing $\omega_i$ in $G_i$ with uncovered weight at most $a$, such that all triangles have weight at most $\beta$. Let $\omega = (1/r) \cdot \sum_{i \in [r]} \omega_i$. Then $\omega$ is a triangle packing in $(G, \phi)$ (as $\omega(e) \le (r - (1 - \phi(e))r)/r = \phi(e)$) with uncovered weight at most $a$ such that all triangles have weight at most $\beta$. This concludes the proof in the case where $\phi(e)$ is rational for every $e \in E(G)$.

			Now consider the general case, where $\phi(e)$ may be irrational for some edges $e$. For $\ell \in \mathbb{N}$, let $\phi_{\ell}$ be such that $\phi_{\ell}(e)$ is rational for every edge $e$, and $\phi(e) \le \phi_{\ell}(e) \le \min\{1, \phi(e) + 1/{\ell}\}$. Then by the proof for rational edge-weightings, there is a fractional triangle packing $\omega_{\ell}(e)$ with the requirements stated in the claim. By taking the limit of taking the limit of a converging subsequence of $(\omega_{\ell})_{\ell}$, we find a fractional triangle packing $\omega$ that satisfies the requirements for $\phi$.
		\end{proof}

	\section{Computer search} \label{sec:computer}

		In this section we describe the algorithms that we use to prove \Cref{lem:computer}.
		The certificates relevant to the computer search can be found \href{https://liveuclac-my.sharepoint.com/:f:/g/personal/ucahsle_ucl_ac_uk/EmF10ks9B91ApHhBWyKIu0oBzCRu6j3m-lcfvMgrqLq5hA?e=5\%3aJtRVTt&at=9}{\textcolor{blue}{here}}.

		We say that a pair $(N, M)$ of integers which is \emph{relevant} if
		\begin{itemize} 
			\item
				either $N \in \{11, 12, 13\}$ and $M = \binom{N}{2} - (N - 4 + a)$ for some $a \in \{0, \ldots, 4\}$, 
			\item
				or $N \in \{7, \ldots, 10\}$ and $M = \binom{N}{2} - (N - 4)$.
		\end{itemize}

		\subsection{The algorithm}
			The algorithm receives a pair of integers $(N, M)$ as input. 
			It then performs the following steps.
			\begin{enumerate} 
				\item \label{itm:step1}
					Generate all sequences of integers $(d_1, \ldots, d_N)$ such that 
					\begin{itemize} 
						\item
							$d_1 \ge \ldots \ge d_N$,
						\item
							$\sum_{i \in [N]} d_i = 2M$,
						\item
							there exists a graph on $N$ vertices with degree sequence $(d_1, \ldots, d_N)$.
					\end{itemize}
				\item \label{itm:step2}
					Form an auxiliary acyclic digraph $D$ as follows. 
					\begin{itemize} 
						\item
							The vertices of $D$ are degree sequences of graphs $\boldd = (d_1, \ldots, d_n)$ such that $d_1 \ge \ldots \ge d_n$.
						\item
							The \emph{sinks} (namely the vertices of out-degree $0$) are the degree sequences generated in step \ref{itm:step1}.
						\item
							For every $\boldd = (d_1, \ldots, d_n)$ in $F$, the collection of in-neighbours is defined as follows.
							\begin{enumerate}[label = \rm(\alph*), ref = 2\rm(\alph*)] 
								\item \label{itm:step2a}
									If $\boldd$ is the empty sequence, it has no in-neighbours.
								\item \label{itm:step2b}
									If $\sum_{i \in [n]} > \frac{1}{2} \binom{n}{2}$, set $\boldd' = (n-1-d_n, \ldots, n-1-d_1)$, and let $\boldd'$ be the only in-neighbour of $\boldd$.
								\item \label{itm:step2c}
									Otherwise, if $d_n \in \{0, 1\}$, let $i$ be the least integer satisfying $d_{i+1} \le 1$.
									Let the in-neighbourhood of $\boldd$ consist of sequences $\boldd' = (d_1', \ldots, d_i')$ such that
									\begin{itemize} 
										\item
											$d_1' \ge \ldots \ge d_i'$,
										\item
											$d_j' \le d_j$ for every $j \in [i]$,
										\item
											$\sum_{j \in [i]} d_j - \sum_{j \in [i]} d_j' \le \sum_{j \in \{i+1, \ldots, n\}} d_j$,
										\item
											$\boldd'$ is a degree sequence of a graph.
									\end{itemize}
								\item \label{itm:step2d}
									If neither of the previous conditions hold, let the in-neighbours of $\boldd$ be the sequences $\boldd' = (d_1', \ldots, d_{n-1}')$ satisfying
									\begin{itemize} 
										\item
											$d_1' \ge \ldots \ge d_{n-1}'$,
										\item
											$d_j' \in \{d_j-1, d_j\}$ for every $j \in [n-1]$,
										\item
											$\sum_{j \in [n-1]} d_j - \sum_{j \in [n-1]} d_j' = d_1$,
										\item
											$\boldd'$ is a degree sequence of a graph.
									\end{itemize}
							\end{enumerate}
						\item
							In particular, the only \emph{source} (namely vertex of in-degree $0$) is the empty sequence. 
					\end{itemize}
				\item \label{itm:step3}
					For each $\boldd \in D$, we generate the collection $\G(\boldd)$ of all graphs with degree sequence $\boldd$, as follows.
					\begin{itemize} 
						\item
							For $\boldd$ being the empty set, we set $\G(\boldd)$ to consist of the empty graph with empty vertex set.
						\item
							Suppose that we have calculated $\G(\boldd)$ for some $\boldd = (d_1, \ldots, d_n) \in V(D)$. Then for every out-neighbour $\boldd'$ of $\boldd$, let $\G(\boldd')$ consist of all graphs $G'$ with degree sequence $\boldd'$, that can be obtained from some $G \in \G(\boldd)$ by
							\begin{itemize}
								\item
									taking the complement of $G$, if the edge $\boldd \boldd'$ was formed in step \ref{itm:step2b},
								\item
									adding some new vertices to $G$ and joining each of them with at most one (new or existing) vertex, if $\boldd \boldd'$ was formed according to step \ref{itm:step2c},
								\item
									adding a new vertex to $G$ and joining it to at least two existing vertices, if $\boldd \boldd'$ was formed according to step \ref{itm:step2d}.
							\end{itemize}
					\end{itemize}
				\item \label{itm:step4}
					For each sink $\boldd$ in $D$ (so $\boldd$ is a degree sequence of an $N$-vertex graph with $M$ edges), and for each graph $G \in \G(\boldd)$, run a linear program to minimise the uncovered weight of a fractional triangle packing in $G$ with no heavy triangles (i.e.\ with no triangles of weights larger than $1/2$).
			\end{enumerate}

			{\bf Outcome.}\,
			For every relevant $(N, M)$, for sinks in the graph $D$ generated for $(N, M)$, all the graphs in $\G(\boldd)$ have a fractional triangle packing with uncovered weight at most $a$ (where $M = \binom{N}{2} - (N - 4 + a)$) with no heavy triangles.

		\subsection{Proof of \Cref{lem:computer}}

			It is now easy to prove \Cref{lem:computer}.
			\begin{proof} %[Proof of \Cref{lem:computer}]
				Fix some $(N, M)$, and let $D$ be the directed graph generated by the algorithm for $(N, M)$. It is easy to see that $\G(\boldd)$ is the collection of all graphs with degree sequence $\boldd$, for every vertex $\boldd$ in $D$. Indeed, this can be done by induction, noting that every graph $G$ with degree sequence $\boldd$ can be obtained from a graph $G'$ with degree sequence $\boldd'$ for some in-neighbour $\boldd'$ of $\boldd$, as described in step \ref{itm:step3}.
				In particular, the union of the families $\G(\boldd)$ over all sinks $\boldd$ of $D$ is the collection of all graphs on $N$ vertices with $M$ edges, using the fact that the set of sinks is the set of degree sequences of such graphs, by steps \ref{itm:step1} and \ref{itm:step2}.
				It thus follows from the outcome of the algorithm that every $N$-vertex graph with $M$ edges has a fractional triangle packing with uncovered weight at most $a$ and no heavy triangles, where $M = \binom{N}{2} - (N-4+a)$, and $(N, M)$ is relevant.
				\Cref{lem:computer} follows.
			\end{proof}

		\subsection{Remarks}

			We conclude this section with some remarks regarding the algorithm.

			\begin{enumerate} 
				\item
					In order to determine if a sequence $\boldd = (d_1, \ldots, d_n)$, where $d_1 \ge \ldots \ge d_n$, is a degree sequence of a graph, we use the well-known criterion due to Erd\H{o}s and Gallai \cite{erdos-gallai}, according to which $\boldd$ is a degree sequence of a graph if and only if $\sum_{i \in [n]} d_i$ is even, and 
					\begin{equation*}
						\sum_{i \in [k]}d_i \le k(k-1) + \sum_{i \in \{k+1, \ldots, n\}} \min\{k, d_i\}
					\end{equation*}
					for every $k \in [n]$.
				\item
					For correctness, it is not necessary to allow for edges of $D$ as in steps \ref{itm:step2b} and \ref{itm:step2c}. We do include such edges in $D$, as this means that we will mostly consider degree sequences of relatively sparse graphs (after `taking the complements' of degree sequences corresponding to graphs on $N$ vertices with $M$ edges). Such graphs are likely to have many leaves and isolated vertices, and removing them all at once, rather than one by one, decreases the size of $D$ and thus lets the algorithm to run faster.
				\item
					When forming the collections $\G(\boldd)$ in step \ref{itm:step3}, we adapt an algorithm of McKay and Piperno \cite{mckay-piperno} to detect whether a newly generated graph is isomorphic to a graph that was generated previously. 
				\item
					In principle, the fractional triangle packings found in step \ref{itm:step4} may be susceptible to rounding errors. To account for this possibility we find a rational approximation of the packings found, using continuous fractions approximations (while ensuring that the weights are non-negative, and that no edge receives weight larger than $1$). In practice, the program did not encounter any issues related to rounding errors. Nevertheless, for correctness, this had to be checked.
			\end{enumerate}

	\section{The proof} \label{sec:proof}

		In this section we prove \Cref{thm:n-four-a}.
		
		\begin{proof}%[Proof of \Cref{thm:n-four-a}] 
			We prove the result by induction on $n$. 
			By \Cref{lem:reduction-n-4-non-edges}, it suffices to prove the statement for graphs with exactly $n-4+a$ non-edges, where $a \in \{0, \ldots, 4\}$. 
			The case $n \in \{11, 12, 13\}$ thus follows from \Cref{lem:computer}.
			Let $n \ge  14$ and suppose that the statement of \Cref{thm:n-four-a} holds for $n - 1$ and $n - 2$.
			Let $G$ be a graph on $n \ge 14$ vertices with exactly $n - 4 + a$ non-edges, where $a \in \{0, \ldots, 4\}$.
			We consider four cases: there is a vertex $u$ with $\codeg(u) > (n+a)/3$; $m \in \{0, 1, 2, 3\}$; $a < 4$ and $m \ge 4$; and $a = 4$ and $m \ge 4$. The latter two carry the main difficulty of the proof.
			
			\subsection{Case 1. There is a vertex $u$ with $\codeg(u) > (n+a)/3$}

				Write $\codeg := \codeg(u)$. Consider the graph $G[N(u)]$; it has $d: = |N(u)| = n - 1 - \codeg$ vertices and at most $n - 4 + a - \codeg = d - 3 + a$ missing edges. By \Cref{cor:hamilton}, if $d \ge 3$, there is Hamilton cycle in $G[N(u)]$ with $\alpha \le a$ missing edges; i.e.\ there is an ordering $x_1, \ldots, x_d$ of the vertices in $N(u)$, such that $x_i x_{i+1}$ is an edge in $G$ for all but $\alpha$ values of $i \in [d]$ (addition is taken modulo $d$). Let $\omega'$ be the fractional triangle packing that gives each triangle $ux_ix_{i+1}$ with $x_i x_{i+1} \in E(G)$ weight $1/2$, and let $G'$ be the weighted graph obtained from $G \setminus \{u\}$ by giving $x_i x_{i+1}$ weight $1/2$ whenever $x_i x_{i+1}$ is an edge of $G$; giving weight $1$ to every other edge of $G \setminus \{u\}$; and giving non-edges weight $0$. 
				The total missing weight in $G'$ is at most
				\begin{align*} 
					n - 4 + a - \codeg + (d-\alpha)/2 
					& = n - 4 + a - \alpha/2 + (n - 1 - \codeg)/2 - \codeg \\
					& = 3n/2 -4.5 + a - \alpha/2 - 3\codeg/2 \\
					& \le (3n/2 - 4.5 + a - \alpha/2) - (n + \alpha + 1)/2 \\
					& = n - 5 + a - \alpha \\
					& = (n-1) - 4 + (a - \alpha),
				\end{align*}
				using $3\codeg \ge n + a + 1 \ge n + \alpha + 1$ for the inequality.
				By the induction hypothesis together with \Cref{lem:integer-to-fractional}, it follows that there is a fractional triangle packing in $G'$ with no heavy triangles and with uncovered weight at most $a - \alpha$. Combining this packing with $\omega$, we obtain a fractional triangle packing of $G$ with uncovered weight at most $a$ and no heavy triangles, as required.

				It remains to consider the case where $d \in \{0,1,2\}$. If $d \in \{0, 1\}$ then $\codeg \ge n - 2$, so $a \ge 2$. The graph $G \setminus \{u\}$ has at most two missing edges, so by induction it has a fractional triangle decomposition $\omega'$, which is a triangle packing in $G$ with uncovered weight at most $1 \le a$. If $d = 2$ then $\codeg \ge n - 3$, so $a \ge 1$. If $a \ge 2$, we can again apply the induction hypothesis to conclude that there is a fractional triangle decomposition in $G'$, which is a fractional triangle packing in $G$ with uncovered weight at most $2 \le a$. Finally if $d = 2$ and $a = 1$, then $G[N(u)]$ consists of two adjacent vertices. We think of the single edge in $G[N(u)]$ as a Hamilton cycle with one missing edge, and repeat the above argument.

				From now on, we assume that $\codeg(u) \le (n+a)/3$ for every vertex $u$.
				Let $Z$ be the set of vertices $u$ with $\codeg(u) = 0$, let $U := V(G) \setminus Z$, and write $m := |Z|$.

			\subsection{Case 2. $m \in \{0, 1, 2, 3\}$}

				Let $K$ be the set of vertices $u$ with $\codeg(u) \ge 3$, let $L$ be the set of vertices $u$ with $\codeg(u) = 2$, and denote $k := |K|$ and $\ell := |L|$.

				\begin{claim} \label{claim:m-two-matching-var}
					$2k + \ell \ge m + 3$.
				\end{claim}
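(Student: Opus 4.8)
We are in Case 2, so $m = |Z| \in \{0,1,2,3\}$, where $Z$ is the set of vertices of non-degree $0$. Recall also that we have passed Case 1, so $\codeg(u) \le (n+a)/3$ for every vertex $u$, and $G$ has exactly $n-4+a$ non-edges with $0 \le a \le 4$ and $n \ge 14$. We want to show $2k + \ell \ge m+3$, where $k$ is the number of vertices of non-degree at least $3$ and $\ell$ the number of vertices of non-degree exactly $2$.

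**Plan: count non-edge endpoints.** The key is to double-count the sum $\sum_{u \in V(G)} \codeg(u)$, which equals $2(n-4+a)$, twice the number of non-edges. Split the vertex set as $Z \cup L \cup M_1 \cup K$, where $M_1$ is the set of vertices of non-degree exactly $1$; write $p := |M_1|$. Vertices in $Z$ contribute $0$; vertices in $M_1$ contribute $1$ each; vertices in $L$ contribute $2$ each; vertices in $K$ contribute at most $(n+a)/3$ each (using the Case 1 conclusion — this is the essential input). So
\[
2(n-4+a) = \sum_u \codeg(u) \le p + 2\ell + k\cdot\frac{n+a}{3}.
\]
Also $|Z| + p + \ell + k = n$, i.e. $p = n - m - \ell - k$. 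Substituting, $2n - 8 + 2a \le (n - m - \ell - k) + 2\ell + k(n+a)/3$, which rearranges to
\[
n - 8 + 2a + m - \ell \le k\!\left(\frac{n+a}{3} - 1\right).
\]

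**Extracting the bound on $2k+\ell$.** Now I would estimate $k$ from this inequality and feed it back. Since $a \le 4$ and $n \ge 14$, we have $(n+a)/3 - 1 \le (n+4)/3 - 1 = (n+1)/3$; more usefully, I want a lower bound on $k$ forcing $2k$ large, or else $\ell$ is large directly. If $\ell \ge m+3$ we are done immediately, so assume $\ell \le m+2$; then the left side above is at least $n - 8 + 2a + m - (m+2) = n - 10 + 2a \ge n - 10 > 0$ (as $n \ge 14$), so in particular $k \ge 1$. To get $k \ge 2$ (which gives $2k \ge 4 \ge m+3$ once $m \le 1$, and needs a further push for $m \in \{2,3\}$), I note that if $k = 1$ the inequality reads $n - 8 + 2a + m - \ell \le (n+a)/3 - 1$, i.e. $\ell \ge n - 7 + 2a + m - (n+a)/3 = (2n - 21 + 5a + 3m)/3$; since $n \ge 14$ and $a, m \ge 0$ this forces $\ell \ge (28-21)/3 = 7/3$, so $\ell \ge 3 \ge m+3$ when $m = 0$, but for $m \ge 1$ it gives $\ell \ge \lceil(2n-21+5a+3m)/3\rceil \ge m+3$ as soon as $2n - 21 + 5a \ge 9$, i.e. $n \ge 15$ — and for $n = 14$ one checks the small cases by hand. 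In general, the cleaner route is: if $k \le 1$ then $\ell \ge (2n - 21 + 5a + 3m)/3 \ge m + 3$ whenever $n + 5a \ge 15$, which holds for $n \ge 15$ and is checked directly for $n = 14, a \le 4$ (here actually $2\cdot 14 - 21 = 7$, and with $k=1$ we also could have $k=0$ giving an even larger $\ell$). If $k \ge 2$, then when $m \in \{0,1\}$ we are done since $2k \ge 4 \ge m+3$; when $m \in \{2,3\}$ we need $2k + \ell \ge m+3$, and since $m \le |Z|$ with $Z$ a set of non-edge-free vertices one expects $k$ or $\ell$ to grow with $m$ — I would rerun the counting inequality keeping track of $m$ to conclude $k \ge 2 \Rightarrow 2k + \ell \ge 4 \ge m+3$ fails only at $m=2$ with $\ell = 0$ or $m = 3$, cases handled by observing that the inequality $n - 8 + 2a + m - \ell \le k((n+a)/3 - 1)$ with small $\ell$ forces $k$ correspondingly larger.

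**Main obstacle.** The arithmetic bookkeeping across the sub-ranges of $m \in \{0,1,2,3\}$ and $a \in \{0,\dots,4\}$ is the fiddly part; the conceptual content is entirely the single averaging inequality $\sum_u \codeg(u) = 2(n-4+a)$ combined with the uniform bound $\codeg(u) \le (n+a)/3$. I expect the cleanest writeup isolates the identity $2(n-4+a) = p + 2\ell + \sum_{u \in K}\codeg(u)$ together with $p = n - m - \ell - k$, derives
\[
k\!\left(\frac{n+a}{3} - 1\right) \ge \sum_{u \in K}\codeg(u) - k = 2(n-4+a) - p - 2\ell - k = n - 8 + 2a + m - \ell,
\]
and then argues: either $\ell \ge m+3$ (done), or $\ell \le m+2$ so the right side is $\ge n - 10 + 2a$, forcing $k \ge (n-10+2a)/((n+a)/3 - 1)$; since $n \ge 14$ this quotient exceeds $2$ once $n$ is not too small, giving $k \ge 3$ hence $2k \ge 6 \ge m+3$, with the residual small cases ($n = 14$, $k = 2$) dispatched by direct substitution. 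The one genuine subtlety to double-check is that $(n+a)/3 - 1 > 0$, which holds as $n \ge 14 > 3$.
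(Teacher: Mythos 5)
Your approach is the same as the paper's: double-count the non-edges via $\sum_{u}\codeg_G(u)=2(n-4+a)$, bound the contribution of $K$ by $(n+a)/3$ per vertex using the Case~1 conclusion, and use the exact contributions $2$ and $1$ for $L$ and the non-degree-one vertices. Your key inequality $n-8+2a+m-\ell\le k\bigl(\tfrac{n+a}{3}-1\bigr)$ is correct and is an equivalent rearrangement of the display in the paper. The difference is purely in the endgame, and yours is left with loose ends: the sub-case $k=2$, $m\in\{2,3\}$ is only sketched (``I would rerun the counting inequality\dots''), and the claim in your final paragraph that the quotient $3(n-10+2a)/(n+a-3)$ ``exceeds $2$'' so that $k\ge 3$ is false for much of the relevant range (for $a=0$ it requires $n\ge 25$, so the ``residual small cases'' are all of $14\le n\le 24$, not just $n=14$). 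These gaps do close with the same arithmetic, but the paper avoids them entirely by arguing in the contrapositive: assuming $2k+\ell\le m+2$, it substitutes $\ell\le m+2-2k$ into the counting bound and obtains $n\le \frac{30+(k-6)a-9k}{3-k}\le 12$ for each of $k=0,1,2$ (and $k\ge 3$ is vacuous since then $2k\ge 6\ge m+3$), contradicting $n\ge 14$ in one stroke. You should adopt that substitution to finish; as written, your case analysis is not complete.
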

				
				\begin{proof} 
					Suppose that $2k + \ell \le m + 2$. Then
					\begin{align*} 
						2(n - 4 + a) = 
						\sum_{u \in U} \codeg(u) 
						& \le k \cdot \frac{n+a}{3} + 2\ell + n - m - k - \ell \\
						& \le k \cdot \frac{n+a}{3} + m - 2k + 2 + n - m - k \\
						& = k \cdot \frac{n+a}{3} + 2 - 3k + n.
					\end{align*}
					It follows that
					\begin{equation*} 
						n \le \frac{30 + (k - 6) \cdot a - 9k}{3 - k}.
					\end{equation*}
					If $k = 0$, we obtain $n \le 10 - 2a \le 10$; if $k = 1$, we have $n \le 15 - 2.5a - 4.5 \le 10.5$; and if $k = 2$, we have $n \le 30 - 4a - 18 \le 12$. Either way, we reach a contradiction to the assumption that $n \ge 14$, thus proving the claim. 
				\end{proof}

				Let $M_1, M_2$ be two edge-disjoint matchings between $Z$ and $K \cup L$ that cover $Z$, such that every vertex in $L$ is covered by at most one of the two matchings. By \Cref{claim:m-two-matching-var}, such a matching exists.
				Indeed, by \Cref{claim:m-two-matching-var} (using $m \le 3$), there exist sets $S_1, S_2 \subseteq K$ and $T_1, T_2 \subseteq L$, such that $T_1$ and $T_2$ are disjoint, $|S_i| + |T_i| = m$ for $i \in [2]$, and $S_1$ and $S_2$ are disjoint if $m = 1$. Now take $M_1$ to be any perfect matching in $G[Z, S_1 \cup T_1]$, and take $M_2$ to be any perfect matching in $G[Z, S_2 \cup T_2] \setminus M_1$. 
				Write $d_1(u)$ and $d_2(u)$ for the degree of $u$ in $M_1$ and $M_2$, respectively. 

				For $u \in U$ let $G_u$ be the graph obtained from $G$ by removing $u$, removing $z$ if $uz \in M_1$ for some $z \in Z$, and removing the edge $uz$ if $uz \in M_2$ for some $z \in Z$ (note that at most two vertices and at most one edge are removed).
				Define $r(u) = \min\{a, \codeg(u) - 1 - d_1(u) - d_2(u)\}$; so $r(u) \ge 0$ for every vertex $u$, by choice of $M_1$ and $M_2$.
				By definition of $M_1$ and $M_2$, the graph $G_u$ has $n - 1 - d_1(u)$ vertices and the following number of missing edges
				\begin{align*}
					n - 4 + a - \codeg(u) + d_2(u) 
					& = n - 5 - d_1(u) + \left(a - \left(\codeg(u) - 1 - d_1(u) - d_2(u)\right)\right) \\
					& \le n - 5 - d_1(u) - (a - r(u)).
				\end{align*}
				(Here we used the assumption that $M_1$ and $M_2$ are edge-disjoint.)
				By induction, there is a fractional triangle packing $\omega_u$ in $G_u$ with uncovered weight at most $a - r(u)$ that has no heavy triangles. Take $\omega = \frac{1}{|U| - 2} \sum_{u \in U} \omega_u$.
				Note that every edge in $G$ appears in exactly $|U| - 2$ of the graphs $G_u$. It follows that the uncovered weight of $\omega$ is at most
				\begin{equation*} 
					\frac{1}{|U| - 2} \sum_{u \in U} (a - r(u)) 
					\le \frac{1}{|U| - 2} \cdot (|U| a - \sum_{u \in U} r(u))
					\le a,
				\end{equation*}
				where for the second inequality we used the following claim. As every triangle appears in at most $|U| - 2$ of the graphs $G_u$, there are no heavy triangles in $\omega$. The proof of \Cref{thm:n-four-a} in this case would be completed once the following claim is proved.

				\begin{claim} 
					$\sum_{u \in U} r(u) \ge 2a$.
				\end{claim}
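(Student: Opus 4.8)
The plan is to bound $\sum_{u \in U} r(u)$ from below by splitting $U$ according to the value of $\codeg(u)$, using the structure of $K$, $L$ and $Z$ that we have already set up. Recall that $r(u) = \min\{a, \codeg(u) - 1 - d_1(u) - d_2(u)\}$, and that $d_1(u) + d_2(u)$ counts how many of the two matchings $M_1, M_2$ meet $u$. The key observations are: vertices in $Z$ contribute $r(u) = 0$ and can be ignored; for a vertex $u \in K$ (so $\codeg(u) \ge 3$) we have $d_1(u) + d_2(u) \le 2$, hence $\codeg(u) - 1 - d_1(u) - d_2(u) \ge 0$, and in fact this quantity is at least $\codeg(u) - 3$; and for a vertex $u \in L$ (so $\codeg(u) = 2$) the matchings were chosen so that $u$ is covered by \emph{at most one} of them, giving $d_1(u) + d_2(u) \le 1$ and thus $r(u) \ge \min\{a, 1\}$. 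So $L$-vertices are a cheap source of weight, and $K$-vertices with large non-degree are another.

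First I would dispose of the trivial case $a = 0$, where the claim reads $\sum r(u) \ge 0$, which is immediate from $r(u) \ge 0$. Then, assuming $a \ge 1$, I would argue case by case on $m = |Z| \in \{0,1,2,3\}$, but really the cleaner route is to note that each matching $M_i$ has exactly $m$ edges, so $\sum_{u \in K \cup L}(d_1(u) + d_2(u)) = 2m \le 6$, and each vertex in $L$ absorbs at most one matching-edge while each vertex in $K$ absorbs at most two. I would then write
\begin{equation*}
	\sum_{u \in U} r(u) \;\ge\; \sum_{u \in K} \min\{a, \codeg(u) - 1 - d_1(u) - d_2(u)\} \;+\; \ell \cdot \min\{a, 1\},
\end{equation*}
and separately handle the contribution from $K$. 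If some single vertex $u \in K$ has $\codeg(u) \ge a + 3$ then $r(u) = a$ already, and a second contribution of at least $a$ has to be squeezed out of the remaining vertices of $K \cup L$ (again using that $2k + \ell \ge m + 3 \ge 3$, so $K \cup L$ is never too small, and distinguishing whether $\ell$ is large or $K$ contains another high-non-degree vertex). If no vertex of $K$ is that large, then every $u \in K$ has $\codeg(u) \le a + 2$, and one combines the edge-count $\sum_{u \in U}\codeg(u) = 2(n-4+a)$ with the degree cap $\codeg(u) \le (n+a)/3$ and $m \le 3$ to force $k$ and $\ell$ to be reasonably large — large enough that $\sum_{u \in K}(\codeg(u) - 3) + \ell \ge 2a$; here the bound $2k + \ell \ge m+3$ from \Cref{claim:m-two-matching-var} and the global edge count do the work, since $n \ge 14$ makes $\sum \codeg(u)$ sizeable.

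The main obstacle I expect is the bookkeeping when $a = 4$ and $m$ is small: here $2a = 8$ is relatively large, $Z$ contributes nothing, and one has to be careful that the two matchings don't `use up' the non-degree of the very vertices one wants to extract weight from — this is exactly why the matchings were built to touch each $L$-vertex at most once, and why $r(u)$ was defined with the $-d_1(u)-d_2(u)$ correction. I would make the argument uniform by proving the slightly stronger pointwise bound $r(u) \ge \min\{a,\, \codeg(u) - 3\}$ for $u \in K$ and $r(u) \ge \min\{a,1\}$ for $u \in L$ (both valid because $d_1(u)+d_2(u)\le 2$ on $K$ and $\le 1$ on $L$), and then reduce the claim to the purely numerical inequality $\sum_{u\in K}\min\{a,\codeg(u)-3\} + \min\{a,1\}\cdot \ell \ge 2a$, which follows from \Cref{claim:m-two-matching-var}, the degree bound $\codeg(u)\le (n+a)/3$, the edge count $\sum_{u\in U}\codeg(u)=2(n-4+a)$, and $n \ge 14$ by the same style of estimate used in the proof of \Cref{claim:m-two-matching-var}.
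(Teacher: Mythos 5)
Your reduction to a pointwise bound loses too much, and the resulting numerical inequality is false. Two concrete problems. First, the bound $r(u) \ge \min\{a,1\}$ for $u \in L$ is wrong: the construction only guarantees that an $L$-vertex is covered by at most one of $M_1, M_2$, and if it is covered by one then $\codeg(u) - 1 - d_1(u) - d_2(u) = 2 - 1 - 1 = 0$, so such a vertex may contribute nothing. Second, and more seriously, subtracting $3$ from every $K$-vertex (i.e.\ using $r(u) \ge \min\{a, \codeg(u)-3\}$) discards the fact that the two matchings have only $2m \le 6$ incidences in total, and your target inequality $\sum_{u\in K}\min\{a,\codeg(u)-3\} + \min\{a,1\}\,\ell \ge 2a$ does not follow from \Cref{claim:m-two-matching-var}, the cap $\codeg(u)\le (n+a)/3$, the count $\sum_{u\in U}\codeg(u)=2(n-4+a)$ and $n\ge 14$. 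For instance, take $n=15$, $a=4$, $m=3$, and let the non-edge graph on $U$ (with $|U|=12$) be two disjoint copies of $K_4$ together with a star $K_{1,3}$: then there are exactly $15 = n-4+a$ non-edges, $k=9$ vertices have $\codeg(u)=3\le (n+a)/3$, $\ell = 0$, and $2k+\ell \ge m+3$, yet your left-hand side is $0 < 8 = 2a$. The true claim still holds in this example precisely because $\sum_{u\in U}(d_1(u)+d_2(u)) = 2m = 6$, so $\sum_u(\codeg(u)-1-d_1(u)-d_2(u)) = 2(n-4+a)-(n-m)-2m$ is large; this exact accounting is what your pointwise bounds throw away.

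The paper's argument keeps that exact accounting and never passes through $K$ and $L$. It assumes $\sum_{u\in U} r(u) \le 2a-1$ and splits into two cases: either $\codeg(u) \le a+1+d_1(u)+d_2(u)$ for all $u\in U$, in which case the truncation at $a$ is inactive and $\sum_u r(u) = 2(n-4+a)-(n-m)-2m = n-8+2a-m$, forcing $n \le 7+m \le 10$; or exactly one vertex $v$ has $\codeg(v)\ge a+2+d_1(v)+d_2(v)$ (two such vertices would already give $\sum r(u)\ge 2a$), in which case $r(v)=a$ and summing $\codeg(u)-1-d_1(u)-d_2(u)$ over $u\ne v$, using the cap $\codeg(v)\le (n+a)/3$ only for $v$, forces $n \le 9 - a + 3m/2 \le 13.5$. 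Both contradict $n\ge 14$. If you want to salvage your route, you would have to replace the per-vertex deduction of $3$ on $K$ by a global deduction of $2m\le 6$ and handle the $\min\{\cdot,a\}$ truncation by the same ``at most one exceptional vertex'' observation -- at which point you have essentially reconstructed the paper's counting argument.
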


				\begin{proof} 
					Suppose that $\sum_{u \in U} r(u) \le 2a - 1$.
					As $r(u) \ge 0$ for every $u \in U$, we have $a \ge 1$.

					Suppose first that $\codeg(u) \le a + 1 + d_1(u) + d_2(u)$ for every vertex $u \in U$.
					Then 
					\begin{align*} 
						2a - 1 
						\ge \sum_{u \in U} r(u)
						& = \sum_{u \in U} (\codeg(u) - 1 - d_1(u) - d_2(u)) \\
						& = 2(n - 4 + a) - (n - m) - 2m \\
						& = n - 8 + 2a - m.
					\end{align*}
					It follows that $n \le 7 + m \le 10$, a contradiction to $n \ge 14$.

					Now suppose that $\codeg(v) \ge a + 2 + d_1(v) + d_2(v)$ for some vertex $v$, implying that $\codeg(u) \le a + 1 + d_1(u) + d_2(u)$ for every $u \in U \setminus \{v\}$ (as otherwise $\sum_u r(u) \ge 2a$). So
					\begin{align*} 
						2a - 1
						\ge \sum_{u \in U} r(u) 
						& = a + \sum_{u \in U \setminus \{v\}} (\codeg(u) - 1 - d_1(u) - d_2(u)) \\
						& \ge a + 2(n - 4 + a) - \frac{n+a}{3} - (n - m - 1) - 2m \\
						& = \frac{2n}{3} + 2a + \frac{2a}{3} - 7 - m.
					\end{align*}
					It follows that
					$n \le 9 - a + \frac{3m}{2} \le 13.5$, a contradiction.
				\end{proof}

			\subsection{Case 3. $a < 4$, $m \ge 4$}

				For $z \in Z$, let $\omega_{z}$ be a fractional triangle packing in $G \setminus \{z\}$ with no heavy triangles and with uncovered weight (exactly) $a+1$ (such a packing exists by induction). We assume that $\omega_{z}$ is symmetric on $Z$, i.e.\ swapping the roles of any two vertices in $Z$ does not affect $\omega_{z}$ (this can be achieved by averaging over all packings obtained by permutations of $Z \setminus \{z\}$). Similarly, we assume that $\omega_{z'}$ can be obtained from $\omega_z$ by swapping the roles of $z$ and $z'$, for every $z, z' \in Z$.
				Let $\phi_{z}$ be an edge-weighting, of total weight $1$, corresponding to weight uncovered by $\omega_{z}$ (namely, $\sum_{e \in E(G \setminus \{z\})}\phi_z(e) = 1$, and for every edge $e$ in $G \setminus \{z\}$, $\omega_{z}(e) + \phi_z(e) \le 1$); we again assume that $\phi_{z}$ is symmetric with respect to $Z$. Let $\psi_{z}$ be a weighting on $G \setminus \{z\}$ defined by $\psi_{z}(e) = 1 - \omega_{z}(e) - \phi_{z}(e)$ for every edge $e$ in $G \setminus \{z\}$. Write $\gamma := \phi_{z}(zz')$ for some distinct $z, z' \in Z \setminus \{z\}$; $\alpha_u = \phi_{z}(uz)$ for $u \in U$ and $z \in Z \setminus \{z\}$; and $\beta_{e} = \phi_{z}(e)$ for every edge $e$ in $U$ (note that $\gamma$ and $\alpha_u$ are well-defined, by the symmetry with respect to $Z$). Define $\alpha = (m - 1) \sum_{u \in U} \alpha_u$ and $\beta = \sum_{e \in E(G[U])} \beta_{e}$.
				Then
				\begin{equation} \label{eq:missing-weight}
					\binom{m-1}{2} \gamma + \alpha + \beta = 1.
				\end{equation}

				In order to find the required fractional triangle packing in $G$, we use two approaches. In the first one we consider the graphs $G_u$ for $u \in U$, and modify them slightly by reducing the weight of some edges incident with vertices of $Z$, taking $\codeg(u)$ into account; in particular, the larger $\codeg(u)$ is, the more weight we can remove while still being able to use the induction hypothesis. We then use the available weight on edges incident with $Z$ to compensate for the weight encoded by $\phi_z$, to end up with a packing that has at most $a$ uncovered weight (in contrast to the $a+1$ bound for $\omega_z$). This approach works when $m$ is not too large, because the larger $m$ is, the more extra weight we need to compensate for.

				In the second approach we use the edges in $U \times Z$ to compensate for the extra weight encoded by $\beta_{e}$ for $e \in E(G[U])$, and then cover the remaining weight on these cross edges using triangles with at least two vertices in $Z$. This approach works for larger $m$, because as $m$ grows, the ratio between the weight on edges in $Z$ and the weight on edges in $U \times Z$ increases.

				Define $r(u) = \min\{\codeg(u) - 1, a\}$.

				\begin{claim} \label{claim:m-four}
					$\sum_{u \in U} r(u) \ge 2a$.
				\end{claim}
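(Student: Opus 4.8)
The plan is to prove the claim by contradiction: assume $\sum_{u \in U} r(u) \le 2a - 1$ and derive a contradiction with $n \ge 14$. Since each $u \in U$ has $\codeg(u) \ge 1$ and hence $r(u) \ge 0$, this assumption already forces $a \ge 1$. The only external input to the count is the identity $\sum_{u \in U} \codeg(u) = 2(n - 4 + a)$, which holds because every non-edge contributes $1$ to the codegree of each of its two endpoints, and both endpoints necessarily lie in $U$ (the vertices of $Z$ have codegree $0$). I will also use $|U| = n - m$.

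Next I would split into cases according to the number of vertices $v \in U$ with $\codeg(v) \ge a + 2$. If there are at least two such vertices then $r(v) = a$ for each of them, so $\sum_{u \in U} r(u) \ge 2a$ and we are done; so assume there is at most one. If there is none, then $r(u) = \codeg(u) - 1$ for every $u \in U$, whence
\[
	\sum_{u \in U} r(u) = 2(n - 4 + a) - (n - m) = n - 8 + 2a + m ;
\]
combined with $\sum_{u \in U} r(u) \le 2a - 1$ this gives $n + m \le 7$, contradicting $n \ge 14$.

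Finally, suppose there is exactly one vertex $v \in U$ with $\codeg(v) \ge a + 2$, so $r(v) = a$ while $r(u) = \codeg(u) - 1$ for every other $u \in U$. Then
\[
	\sum_{u \in U} r(u) = a + \bigl( 2(n - 4 + a) - \codeg(v) \bigr) - (n - m - 1) .
\]
Here I would invoke the standing assumption of this part of the proof, $\codeg(v) \le (n + a)/3$ (valid since Case 1 has already been handled), to bound the right-hand side below by $\tfrac{8a}{3} + \tfrac{2n}{3} - 7 + m$. Since $n \ge 14$ gives $\tfrac{2n}{3} > 7$, this is at least $2a$, contradicting the assumption once more. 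This completes the proof.

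I do not anticipate a real obstacle: this is a direct double-counting argument, and the only points requiring a little care are using the bound $\codeg(v) \le (n+a)/3$ exactly once (on the single exceptional high-codegree vertex) and remembering that $Z$ contributes nothing to $\sum_{u} \codeg(u)$. The structure closely parallels the preceding claim in Case 2, only without the matching-degree terms $d_1(u), d_2(u)$.
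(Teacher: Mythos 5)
Your proof is correct and follows essentially the same route as the paper's: assume $\sum_{u \in U} r(u) \le 2a-1$, use the identity $\sum_{u \in U} \codeg(u) = 2(n-4+a)$, split on the number of high-codegree vertices (at most one can occur), and apply the standing bound $\codeg(v) \le (n+a)/3$ to that single exceptional vertex to contradict $n \ge 14$. The only cosmetic difference is your threshold $\codeg(v) \ge a+2$ versus the paper's $a+1$ and its unified inequality in the parameter $k$.
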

				\begin{proof} 
					Suppose that $\sum_u r(u) \le 2a - 1$.
					Let $k$ be the number of vertices $u \in U$ with $\codeg(u) \ge a + 1$; then $k \le 1$. 
					We have
					\begin{align*} 
						2a - 1 \ge \sum_{u \in U} r(u) 
						& \ge \sum_{u \in U} (\codeg(u) - 1) - k \left(\frac{n+a}{3} - 1\right) + k a \\
						& = 2(n - 4 + a) - (n - m) - \frac{kn}{3} - \frac{ka}{3} + k + ka \\
						& = \frac{(3-k)n}{3} - (8 - k) + 2a + \frac{2ka}{3} + m.
					\end{align*}
					If $k = 0$, we obtain $n \le 7 - m \le 7$; and if $k = 1$, we obtain $n \le 9 - 3m/2 - a \le 9$. Either way, this is a contradiction to $n \ge 14$.
				\end{proof}

				Let $\sigma : U \to \Z^{\ge 0}$ be such that $\sigma(u) \le r(u)$ and $\sum_u \sigma(u) = 2a$; such a weight assignment exists by \Cref{claim:m-four}. 
				Let $H$ be an auxiliary bipartite graph, with vertex sets $X$ and $Y$, where $X = \{u_0 : u \in U\} \cup \{\zeta\}$, and $Y = \{u_1 : u \in U\}$, and edge set $X \times Y \setminus \{u_0 u_1 : u \in U\}$. We think of $u_0$ and $u_1$ as representing $u$, and of $\zeta$ as representing $Z$.
				We assign a weight $\tau(x)$ to every vertex $x \in V(H)$, as follows.
				\begin{align*} 
					\tau(x) = \left\{
						\begin{array}{ll}
							m \sum_{v \in U} \beta_{uv} & x = u_0 \text{ for some $u \in U$} \\
							\frac{m}{2} \cdot \alpha & x = \zeta \\
							\codeg(u) - 1 - \sigma(u) & x = u_1 \text{ for some $u \in U$}.
						\end{array}
						\right.
				\end{align*}
				(If $uv$ is not an edge, $\beta_{uv} = 0$.)

				A \emph{fractional matching} in $H$ is an assignment $\nu : E(H) \to \R^{\ge 0}$ such that $\sum_{w \in V(H)} \nu(vw) \le \tau(v)$ for every $v \in V(H)$. We say that a fractional matching $\nu$ \emph{saturates} $V$ if $\sum_{w \in V(H)} \nu(vw) = \tau(v)$ for every $v \in V$.

				\begin{claim} \label{claim:matching}
					If $m \le n - 8$, then there is a fractional matching in $H$ that saturates $X$.	
				\end{claim}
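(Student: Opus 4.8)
The plan is to reduce the claim to a Hall-type feasibility condition and then verify it by a short computation. By the max-flow min-cut theorem (equivalently, LP duality for the transportation polytope), a bipartite graph $H$ with parts $X,Y$ and nonnegative vertex weights $\tau$ admits a fractional matching saturating $X$ if and only if
\[
	\sum_{x \in S} \tau(x) \;\le\; \sum_{y \in N_H(S)} \tau(y) \qquad \text{for every } S \subseteq X .
\]
(One should first note that all the $\tau$-values are nonnegative: this is clear for $\zeta$ and for the vertices $u_0$, and for $u_1$ it follows from $\sigma(u) \le r(u) \le \codeg(u) - 1$.) So it suffices to check this inequality for every $S \subseteq X$.

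Next I would use the structure of $H$ to cut down the subsets that need to be considered. Since $\zeta$ is adjacent to all of $Y$, and since for any $v \in U$ every $u_0$ with $u \neq v$ is adjacent to $v_1$, we have $N_H(S) = Y$ whenever $S$ contains $\zeta$ or contains at least two of the vertices $\{u_0 : u \in U\}$. As all $\tau$-values are nonnegative, for every such $S$ the required inequality is implied by $\tau(X) \le \tau(Y)$, where I write $\tau(A) := \sum_{x\in A}\tau(x)$. The only remaining subsets are the singletons $S = \{u_0\}$, for which $N_H(\{u_0\}) = Y \setminus \{u_1\}$. Hence it remains to prove (i)~$\tau(X)\le\tau(Y)$, and (ii)~$\tau(u_0) \le \tau(Y) - \tau(u_1)$ for every $u \in U$.

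To evaluate $\tau(Y)$, note that the vertices of $Z$ have codegree $0$, so $\sum_{u \in U}\codeg(u) = \sum_{v \in V(G)}\codeg(v) = 2(n-4+a)$; together with $|U| = n - m$ and $\sum_{u\in U}\sigma(u) = 2a$ this gives $\tau(Y) = \sum_{u\in U}(\codeg(u) - 1 - \sigma(u)) = n - 8 + m$. For (i), observe that $\sum_{u \in U}\tau(u_0) = m\sum_{u,v\in U}\beta_{uv} = 2m\beta$ and $\tau(\zeta) = \tfrac m2 \alpha$, while \eqref{eq:missing-weight} yields $\alpha + \beta \le 1$; hence
\[
	\tau(X) \;=\; \tfrac m2\alpha + 2m\beta \;\le\; 2m(\alpha+\beta) \;\le\; 2m \;\le\; n - 8 + m \;=\; \tau(Y),
\]
where the last inequality is exactly the hypothesis $m \le n - 8$. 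For (ii), I would use that $\phi_z$ has total weight $1$, so $\sum_{v \in U}\beta_{uv} \le 1$ and therefore $\tau(u_0) \le m$; on the other hand $\tau(Y) - \tau(u_1) = n - 7 + m - \codeg(u) + \sigma(u) \ge n - 7 + m - \codeg(u) \ge m$, since $\sigma(u) \ge 0$ and $\codeg(u) \le (n+a)/3 \le (n+4)/3 \le n - 7$ using $n \ge 14$. This establishes both inequalities and hence the claim.

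Since each step is a short computation, I do not expect a genuine obstacle; the only points requiring care are the correct determination of $N_H(S)$ for all $S \subseteq X$ (so that the problem really reduces to cases (i) and (ii)) and checking that the numerology closes — in particular that the single hypothesis $m \le n-8$ is precisely what case (i) needs, while case (ii) is comfortably covered by the standing bound $\codeg(u) \le (n+a)/3$ together with $n \ge 14$.
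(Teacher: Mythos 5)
Your proposal is correct and follows essentially the same route as the paper: reduce via the fractional Hall condition to the two inequalities $\tau(X)\le\tau(Y)$ and $\tau(u_0)\le\tau(Y\setminus\{u_1\})$, compute $\tau(Y)=n+m-8$, bound $\tau(X)\le 2m$ using \eqref{eq:missing-weight}, and bound $\tau(u_0)\le m$ against $\tau(Y)-\tau(u_1)\ge m$ using $\codeg(u)\le(n+a)/3$ and $n\ge 14$. The only differences are cosmetic (invoking max-flow min-cut rather than fractional Hall, and using $a\le 4$ where the paper uses $a\le 3$, which still closes the numerics).
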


				\begin{proof} 
					By a fractional version of Hall's theorem, it suffices to show that every set $A \subseteq X$ satisfies $\tau(N(A)) \ge \tau(A)$.
					As $N(A) = Y$ for every $A \subseteq X$ except for $A = \emptyset$ or $A = \{u_0\}$ for some $u \in U$, it suffices to check that $\tau(Y) \ge \tau(X)$ and $\tau(Y \setminus \{u_1\}) \ge \tau(u_0)$ for every $u \in U$.
					\begin{align*}
						\tau(Y) 
						& = \sum_{u \in U} (\codeg(u) - 1 - \sigma(u)) = 2(n - 4 + a) - (n - m) - 2a = n + m - 8. \\
						\tau(X) 
						& = m \sum_{u \in U} \sum_{v \in U} \beta_{uv} + \frac{m}{2} \cdot \alpha
						= 2m \beta + \frac{m}{2} \cdot \alpha
						\le 2m,
					\end{align*}
					by \eqref{eq:missing-weight}. 
					Thus, as $m \le n - 8$, we have $\tau(Y) \ge \tau(X)$.

					Fix $u \in U$. Then
					\begin{align*} 
						& \tau(u_0)
						= m \sum_{v \in U} \beta_{uv} 
						\le m, \\
						& \tau(Y \setminus \{u_1\})
						\ge \tau(Y) - \codeg(u) + 1
						\ge n + m - 8 - \frac{n+a}{3} + 1 
						\ge \frac{2n}{3} + m - 8 
						\ge m,
					\end{align*}
					using \eqref{eq:missing-weight}, $2n/3 \ge 28/3 > 8$ and $a \le 3$. Thus, $\tau(Y \setminus \{u_1\} \ge \tau(u_0)$ for every $u \in U$, completing the proof of \Cref{claim:matching}.
				\end{proof}
				Consider a fractional matching as in \Cref{claim:matching}, and let $\nu_u(v)$ be the weight of the edge $v_0 u_1$ for $u, v \in U$, and let $\nu_u(\zeta)$ be the weight of the edge $\zeta u_1$.
				Then
				\begin{align*} 
					& \sum_{v \in U} \nu_u(v) + \nu_u(\zeta) \le \tau(u_1) = \codeg(u) - 1 - \sigma(u),\\
					& \sum_{u \in U} \nu_u(v) = \tau(u_0) = m \sum_{u \in U}\beta_{uv},\\
					& \sum_{u \in U} \nu_u(\zeta) = \tau(\zeta) = \frac{m}{2} \cdot \alpha.
				\end{align*}
				Let $G_u$ be the weighted graph obtained from $G \setminus \{u\}$ by decreasing the weight of $vz$ (from $1$) by $\nu_u(v)/m$ for $v \in U \setminus \{v\}$ and $z \in Z$, and decreasing the weight of $zz'$ by $\nu_u(\zeta)/ \binom{m}{2}$ for every distinct $z, z' \in Z$. The missing weight in $G_u$ is 
				\begin{align*} 
					n - 4 + a - \codeg(u) + \sum_{v \in U} \nu_u(v) + \nu_u(\zeta) 
					& \le n - 4 + a - \codeg(u) + \codeg(u) - 1 - \sigma(u) \\
					& = (n - 1) - 4 + a - \sigma(u).
				\end{align*}
				Thus, by the induction hypothesis and \Cref{lem:integer-to-fractional}, there is a fractional triangle packing $\omega_u$ in $G_u$ with no heavy triangles and with uncovered weight at most $a - \sigma(u)$; let $\psi_u(e)$ be the uncovered weight at $e$, for any edge $e$ in $G_u$.
				
				Let $\omega'$ be a fractional triangle packing defined as follows, for distinct $u, v \in U$ and $z, z', z'' \in Z$,
				\begin{equation*} 
					\omega'(uvz) = \beta_{uv} \qquad \qquad
					\omega'(uzz') = \alpha_u \qquad \qquad
					\omega'(zz'z'') = \gamma,
				\end{equation*}
				and
				\begin{align*} 
					\omega = \frac{1}{n-2} \left( \sum_{v \in V(G)} \omega_v + \omega'\right) \qquad \qquad
					\psi = \frac{1}{n-2} \sum_{v \in V(G)} \psi_v.
				\end{align*}
				
				\begin{claim} \label{claim:m-four-end}
					\hfill
					\begin{enumerate} [label = \rm(\alph*)]
						\item \label{itm:total-weight}
							$\omega(e) + \psi(e) = 1$ for every edge $e$ in $G$,
						\item \label{itm:psi-bounded}
							$\sum_{e \in E(G)} \psi(e) \le a$.
					\end{enumerate}
				\end{claim}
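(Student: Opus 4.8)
The plan is to establish both parts by direct computation, using only the definitions of $\omega$, $\psi$, $\omega'$, the relation $\psi_z(e) = 1 - \omega_z(e) - \phi_z(e)$, the defining properties of the weighted graphs $G_u$, and the saturation identities $\sum_{v\in U}\nu_v(u) = \tau(u_0)$, $\sum_{v\in U}\nu_v(\zeta) = \tau(\zeta)$ coming from the fractional matching $\nu$. The two facts I would lean on throughout are: for $z \in Z$ with $z \notin e$ one has $\omega_z(e) + \psi_z(e) = 1 - \phi_z(e)$; and for $u \in U$ with $u \notin e$ one has $\omega_u(e) + \psi_u(e)$ equal to the weight of $e$ in the weighted graph $G_u$, i.e.\ $1$ minus the amount by which that edge's weight was lowered in the construction of $G_u$.

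For \ref{itm:total-weight}, fix an edge $e$ of $G$ and write $\omega(e) + \psi(e) = \tfrac{1}{n-2}\bigl(\sum_{v \in V(G)}(\omega_v(e) + \psi_v(e)) + \omega'(e)\bigr)$, noting that the summand vanishes when $v$ is an endpoint of $e$, so exactly $n-2$ terms contribute. I would then split into three cases according to how many endpoints of $e$ lie in $U$. If $e = zz' \subseteq Z$: the $\phi_v$-values are all $\gamma$ (over $m-2$ choices of $v \in Z$), the lowered amounts are $\nu_v(\zeta)/\binom{m}{2}$ for the $n-m$ vertices $v \in U$, and $\omega'(e) = (m-2)\gamma + \sum_{u\in U}\alpha_u$ (the triangles on $e$ being $zz'z''$ and $uzz'$); using $\sum_{v\in U}\nu_v(\zeta) = \tau(\zeta) = \tfrac m2 \alpha$, the identity $\binom m2^{-1}\cdot\tfrac m2 = (m-1)^{-1}$, and $\sum_{u\in U}\alpha_u = \alpha/(m-1)$, all non-trivial terms cancel and the bracket equals $(m-2)+(n-m) = n-2$. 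If $e = uz$ with $u\in U$, $z\in Z$: the $\phi_v$-values are $\alpha_u$ (over $m-1$ choices of $v\in Z$), the lowered amounts are $\nu_v(u)/m$ over $v \in U\setminus\{u\}$, and $\omega'(e) = (m-1)\alpha_u + \sum_{v\in U}\beta_{uv}$ (triangles $uvz$ and $uzz'$); using $\sum_{v\in U}\nu_v(u) = \tau(u_0) = m\sum_{v\in U}\beta_{uv}$ everything cancels to $n-2$. If $e = uv \subseteq U$: no weight was lowered in any $G_w$, the $\phi_z$-values are $\beta_{uv}$ over the $m$ vertices of $Z$, and $\omega'(e) = m\beta_{uv}$ (only triangles $uvz$), so the bracket is $m(1-\beta_{uv}) + (n-m-2) + m\beta_{uv} = n-2$ at once. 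In every case $\omega(e) + \psi(e) = 1$.

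For \ref{itm:psi-bounded}, I would interchange the order of summation: $\sum_{e\in E(G)}\psi(e) = \tfrac1{n-2}\sum_{v\in V(G)} w_v$, where $w_v := \sum_{e \in E(G\setminus\{v\})}\psi_v(e)$ is the total of $\psi_v$. For $v = z\in Z$, since the uncovered weight of $\omega_z$ is $a+1$ and $\phi_z$ has total weight $1$, we get $w_z = (a+1) - 1 = a$. For $v = u\in U$, $w_u$ is precisely the uncovered weight of $\omega_u$ in $G_u$, which is at most $a - \sigma(u)$. Hence $\sum_{e}\psi(e) \le \tfrac1{n-2}\bigl(ma + \sum_{u\in U}(a-\sigma(u))\bigr) = \tfrac1{n-2}\bigl(na - \sum_{u\in U}\sigma(u)\bigr) = \tfrac1{n-2}(na - 2a) = a$, using $\sum_{u\in U}\sigma(u) = 2a$.

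There is no conceptual obstacle here; the computation is pure bookkeeping. The step most prone to error is the case analysis in \ref{itm:total-weight}: one must correctly enumerate the triangles carrying $\omega'$-weight that contain a given edge (of type $zz'z''$, $uzz'$, or $uvz$ depending on $e$), and verify that the amount of weight removed from each $G_u$ along $e$ exactly matches the $\tau$-value delivered by the saturated matching $\nu$. The arithmetic identity $\binom m2^{-1}\cdot\tfrac m2 = (m-1)^{-1}$ is the small fact that makes the case $e \subseteq Z$ balance.
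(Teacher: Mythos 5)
Your proposal is correct and matches the paper's own proof essentially line for line: the same three-case analysis of an edge $e$ (in $U$, crossing, or in $Z$) using the saturation identities $\sum_{v}\nu_v(u)=\tau(u_0)$ and $\sum_v\nu_v(\zeta)=\tau(\zeta)$ for part \ref{itm:total-weight}, and the same exchange of summation with $\sum_{u\in U}\sigma(u)=2a$ for part \ref{itm:psi-bounded}. No issues.
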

				\begin{proof} 
					Recall that $\sum_{e} \psi_z(e) \le a$ for every $z \in Z$, and $\sum_e \psi_u(e) \le a - \sigma(u)$ for $u \in U$. 
					It follows that 
					\begin{equation*}
						\sum_{v \in V(G),\,\, e \in E(G)} \psi_v(e) \le na - \sum_u \sigma(u) = (n - 2)a,
					\end{equation*}
					implying that $\sum_e \psi(e) \le a$, as required for \ref{itm:psi-bounded}.

					Let $e$ be an edge in $G[U]$. We consider three cases: $e = uv$ for $u, v \in U$; $e = uz$ for $u \in U$ and $z \in Z$; and $e = zz'$ for $z, z' \in Z$. 
					In the first case, 
					\begin{align*} 
						(n - 2)(\omega(e) + \psi(e))
						& = \sum_{z \in Z} (\omega_z(e) + \psi_z(e)) + \sum_{w \in U \setminus \{u, v\}} (\omega_w(e) + \psi_w(e)) + \omega'(e) \\
						& = m(1 - \beta_{e}) + (n - m - 2) + m \beta_{e} \\
						& = n - 2.
					\end{align*}
					In the second case,
					\begin{align*} 
						(n - 2)(\omega(e) + \psi(e)) 
						& = \sum_{z' \in Z \setminus \{z\}} (\omega_{z'}(e) + \psi_{z'}(e)) + \sum_{v \in U \setminus \{u\}} (\omega_v(e) + \psi_v(e)) + \omega'(e) \\
						& = (m - 1)(1 - \alpha_u) + (n - m - 1) - \frac{1}{m} \sum_{v \in U \setminus \{u\}} \nu_v(u) + \sum_{v \in U \setminus \{u\}} \beta_{uv} + (m - 1)\alpha_u \\
						& = n - 2.
					\end{align*}
					And in the third case,
					\begin{align*} 
						(n - 2)(\omega(e) + \psi(e))
						& = \sum_{z'' \in Z \setminus \{z, z'\}} (\omega_{z''}(e) + \psi_{z''}(e)) + \sum_{u \in U} (\omega_u(e) + \psi_u(e)) + \omega'(e) \\
						& = (m - 2)(1 - \gamma) + n - m - \frac{1}{\binom{m}{2}} \sum_u \nu_u(\zeta) + (m - 2)\gamma + \sum_{u \in U} \alpha_u \\
						& = n - 2.
					\end{align*}
					We conclude that $\omega(e) + \psi(e) = 1$ for every $e \in E(G)$, as required for \ref{itm:total-weight}.
				\end{proof}
				By \Cref{claim:m-four-end}, $\omega$ is a fractional triangle packing in $G$ with uncovered weight at most $a$. There are no heavy triangles in $\omega$, as every triangle appears in at most $n - 2$ of the packings $\omega'$ and $\omega_v$ for $v \in V(G)$, and none of these packings have a heavy triangle. This completes the proof of \Cref{thm:n-four-a} in this case when $m \le n - 8$.

				We now assume that $m \ge n - 7$.

				Define, for distinct $u, v \in u$ and $z, z', z'' \in Z$,
				\begin{align} \label{eq:defn-omega-prime} 
					\begin{split}
						& \omega'(uvz) = \beta_{uv} \\
						& \omega'(uzz') = \frac{1}{m-1} \left(1 + (m - 1) \alpha_u - \sum_{w \in U} \beta_{uw}\right) \\
						& \omega'(zz'z'') = \frac{1}{m-2} \left(2 + (m - 2) \gamma - \frac{n - m}{m - 1} - \frac{\alpha}{m-1} + \frac{2\beta}{m - 1} \right).
					\end{split}
				\end{align}
				Note that $\omega'(T) \ge 0$ for every triangle $T$ in $G$. Indeed, this clearly holds for $T = uvz$ for some $u, v \in U$ and $z \in Z$, as $\beta_{uv} \ge 0$. Next, if $T = uzz'$ for $u \in U$ and $z, z' \in Z$, then, as $\sum_v \beta_{uv} \le 1$ (by \eqref{eq:missing-weight}), we indeed have $\omega'(T) \ge 0$.
				Finally, if $T = zz'z''$ for $z, z', z'' \in Z$, it suffices to show that
				\begin{equation*} 
					2 \ge \frac{n-m}{m-1} + \frac{\alpha}{m-1}.
				\end{equation*}
				As $\alpha \le 1$ (by \eqref{eq:missing-weight}), it suffices to show that
				\begin{equation*} 
					0 \le 2(m - 1) - (n - m) - 1 = 3m - 3 - n.
				\end{equation*}
				Recall that $m \ge n - 7$, so we have $3m - 3 - n \ge 2n - 24 > 0$, as required.

				Define 
				\begin{align} \label{eq:defn-omega-psi} 
					\begin{split}
						& \omega = \frac{1}{m} \left(\sum_{z \in Z} \omega_z + \omega'\right) \\
						& \psi = \frac{1}{m} \sum_{z \in Z} \psi_z.
					\end{split}
				\end{align}
				\begin{claim} \label{claim:m-four-end-two}
					\hfill
					\begin{enumerate} [label = \rm(\alph*)]
						\item \label{itm:psi-bounded-two}
							$\sum_{e \in E(G)} \psi(e) \le a$,
						\item \label{itm:total-weight-two}
							$\omega(e) + \psi(e) = 1$ for every $e \in E(G)$.
					\end{enumerate}
				\end{claim}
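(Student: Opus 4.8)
The plan is to prove both parts by direct computation, following the template used in \Cref{claim:m-four-end}.

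For the first part, I would note that, for each $z \in Z$, since $\omega_z$ has uncovered weight exactly $a+1$ and $\phi_z$ has total weight $1$ on $G \setminus \{z\}$, the definition $\psi_z(e) = 1 - \omega_z(e) - \phi_z(e)$ gives $\sum_{e \in E(G \setminus \{z\})} \psi_z(e) = (a+1) - 1 = a$. Swapping the order of summation then yields $\sum_{e \in E(G)} \psi(e) = \frac1m \sum_{z \in Z} \sum_{e \in E(G \setminus \{z\})} \psi_z(e) = \frac1m \cdot m \cdot a = a$, which is the required bound (in fact an equality).

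For the second part, I would fix an edge $e$ of $G$ and evaluate $m \bigl( \omega(e) + \psi(e) \bigr) = \sum_{z} \bigl( \omega_z(e) + \psi_z(e) \bigr) + \omega'(e)$, the sum being over the $z \in Z$ with $e \in E(G \setminus \{z\})$; as in \Cref{claim:m-four-end}, $\omega_z(e) + \psi_z(e) = 1 - \phi_z(e)$. The argument then splits into three cases according to whether $e$ has $0$, $1$ or $2$ endpoints in $Z$. In each case the symmetry of $\phi_z$ with respect to $Z$ lets me replace $\phi_z(e)$ by $\beta_e$, $\alpha_u$ or $\gamma$, the number of relevant $z$ is $m$, $m-1$ or $m-2$, and I read off $\omega'(e)$ from \eqref{eq:defn-omega-prime}: for $e \subseteq U$ only the $\beta$-triangles meet $e$, so $\omega'(e) = m \beta_e$; for $e$ with one endpoint in $Z$ the $\sum_{w} \beta_{uw}$ terms cancel, so $\omega'(e) = 1 + (m-1)\alpha_u$; and for $e \subseteq Z$, after substituting $|U| = n - m$, $\sum_u \alpha_u = \alpha/(m-1)$ and $\sum_u \sum_w \beta_{uw} = 2\beta$, the $\alpha$- and $\beta$-contributions cancel, so $\omega'(e) = 2 + (m-2)\gamma$. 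Substituting back, $m \bigl( \omega(e) + \psi(e) \bigr) = m$ in all three cases, i.e.\ $\omega(e) + \psi(e) = 1$.

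I do not expect a real obstacle here: the weights in \eqref{eq:defn-omega-prime}, especially the elaborate weight of the all-$Z$ triangles, were evidently chosen to produce exactly these cancellations, so the only thing requiring care is the bookkeeping of which triangles of $\omega'$ pass through an edge of each type and of how many vertices $z \in Z$ contribute to the sum. Note that \eqref{eq:missing-weight} plays no role in this claim; it is used only to verify that $\omega' \ge 0$, which is done before the claim.
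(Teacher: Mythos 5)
Your proposal is correct and follows essentially the same route as the paper: part (a) via $\sum_e \psi_z(e) = (a+1) - 1 = a$ for each $z$, and part (b) via the identity $\omega_z(e)+\psi_z(e) = 1 - \phi_z(e)$ and a three-case computation of $\omega'(e)$ according to $|e \cap Z|$, with exactly the cancellations you describe. Your observation that \eqref{eq:missing-weight} enters only in the earlier verification that $\omega' \ge 0$ is also accurate.
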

				\begin{proof} 
					Recall that $\sum_{e \in E(G \setminus \{z\})} \psi_z(e) \le a$ for every $z \in Z$, \ref{itm:psi-bounded-two} follows from the definition of $\psi$.
					
					For \ref{itm:total-weight-two}, we consider three cases: $e = uv$ with $u, v \in U$; $e = uz$ with $u \in U$ and $z \in Z$; and $e = zz'$ with $z, z' \in Z$. In each of these cases we will show that $m \cdot \omega(e) = m$.
					In the first case we have
					\begin{equation*} 
						m \cdot \omega(e) = m(1 - \beta_{e}) + m \beta_{e} = m.
					\end{equation*}
					In the second case,
					\begin{equation*} 
						m \cdot \omega(e) 
						= (m - 1)(1 - \alpha_u) + \sum_{v \in U} \beta_{uv} + \left(1 + (m - 1)\alpha_u - \sum_{v \in U} \beta_{uv} \right)
						= m.
					\end{equation*}
					And in the third case,
					\begin{align*} 
						m \cdot \omega(e)
						= (m - 2)(1 - \gamma) \,\,+\,\, & \frac{n - m}{m - 1} + \alpha - \frac{2\beta}{m - 1} \\
						+\,\, 2 + (m - 2)\gamma \,\,-\,\, & \frac{n - m}{m - 1} - \alpha + \frac{2\beta}{m - 1} = m,
					\end{align*}
					completing the proof of \Cref{claim:m-four-end-two}.
				\end{proof}

			\subsection{Case 4. $a = 4$, $m \ge 4$}				

				Fix a non-edge $xy$ (so $x, y \in U$). For $z \in Z$, define $G_z$ to be the graph obtained from $G$ by removing the vertex $z$ and adding the edge $xy$. So $G_z$  has $n-1$ vertices and $n - 5 + a$ missing edges, thus by induction there is a fractional triangle packing $\omega_z'$ on $G_z$ that has uncovered weight at most $a$, and has no heavy triangles. We assume that $\omega_z'$ is symmetric on $Z \setminus \{z\}$, and that $\omega_z'$ can be obtained from $\omega_{z'}'$ by swapping the roles of $z$ and $z'$ for every $z, z' \in S$.
				Let $\psi_z$ be the edge-weighting corresponding to the weight uncovered by $\omega_z'$. Let $\phi_z$ be the edge-weighting defined by $\phi_z(vx) = \phi_z(vy) = \omega_z'(vxy)$ for $v \in V(G) \setminus \{x, y\}$. Let $\omega_z$ be the fractional triangle packing obtained from $\omega_z'$ by changing the weight of triangles containing $xy$ to $0$.
				Define $\gamma = \phi_z(z'z'')$, $\alpha_u = \phi_z(uz')$, and $\beta_{uv} = \phi_z(uv)$, for distinct $z, z', z'' \in Z$ and distinct $u, v \in U$, and write $\alpha = (m - 1)\sum_{u \in U} \alpha_u$ and $\beta = \sum_{e \in E(G[U])} \beta_{e}$.
				Then
				\begin{enumerate}[label = \rm(\roman*)] 
					\item
						$\omega_z(e) + \psi_z(e) + \phi_z(e) = 1$ for every $e \in E(G \setminus \{z\})$.
					\item
						$\sum_{e \in E(G[U])} \psi_z(e) \le a$.
					\item \label{itm:phi-two}
						$\sum_{e \in E(G[U])} \phi_z(e) = \binom{m-1}{2}\gamma + \beta + \alpha \le 2$.
					\item \label{itm:beta-one}
						$\sum_{v \in U} \beta_{uv} \le 1$ for every $u \in U$.
				\end{enumerate}
				To see \ref{itm:phi-two}, note that $\sum_{v} \omega_z'(vxy) \le 1$, thus $\sum_e \phi_z(e) = 2 \sum_v \omega_z'(vxy) \le 2$. Let $u \in U$. If $u = x$ or $u = y$, then $\sum_v \beta_{uv} = \sum_v \omega'_z(vxy) \le 1$; and if $u \neq x, y$, then $\sum_v \beta_{uv} = 2\omega_z'(uxy) \le 1$, by the assumption that $\omega_z'$ does not have heavy triangles; \ref{itm:beta-one} follows. We note that \ref{itm:beta-one} is the reason why we introduced the assumption that there are no heavy triangles. 
				
				We follow the two approaches introduced in the previous case. One main difference is the definition of $\phi_z$ (which is necessary because we cannot use the induction hypothesis for $a+1$, as we did in the previous case), which manifests itself in the upper bound of $2$ in \ref{itm:phi-two}, replacing the upper bound of $1$ that we had previously. This implies that in the first approach we need to compensate for more `extra' weight, thus restricting the range of $m$'s for which the approach works. In order to cover all possible values of $m$, we capitalise on the larger value of $a$, which allows us to remove more weight from the graphs $G \setminus \{u\}$ with $u \in U$. The exact details make this case somewhat technical.  
				For convenience, we reverse the order of the two approaches.

				Let $\omega'$ be a fractional triangle packing defined as in \eqref{eq:defn-omega-prime} from the previous case.
				We note that $\omega'(T) \ge 0$ for every triangle $T$ with at least one vertex in $U$. Indeed, as $\beta_{uv} \ge 0$ for every $u, v \in U$, this holds for $T$ with two vertices in $U$; and if $T$ has one vertex in $U$, the non-negativity follows from \ref{itm:beta-one}. If $T$ has three vertices in $Z$, then $\omega'(T) \ge 0$ if $2(m - 1) - (n - m) - \alpha + 2\beta \ge 0$. As $\alpha + \beta \le 2$, it suffices to have
				\begin{equation} \label{eq:ineq-first-approach} 
					3m \ge n + 4 - 3 \beta.
				\end{equation}
				If \eqref{eq:ineq-first-approach} holds, we define $\omega$ and $\psi$ as in \eqref{eq:defn-omega-psi}. The proof of \Cref{thm:n-four-a} can then be completed following the proof of \Cref{claim:m-four-end-two}. Thus, from now on, we assume that \eqref{eq:ineq-first-approach} does not hold.

				As before, put $r(u) = \min\{a, \codeg(u) - 1\}$.
				\begin{claim} \label{claim:sum-r}
					$\sum_{u \in U} r(u) \ge 2a$.
					Moreover, if $3m \ge n - 7$ then $\sum_{u \in U} r(u) \ge 2a + 6$.
				\end{claim}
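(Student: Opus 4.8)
The plan is to prove both inequalities simultaneously by a degree-counting argument in the spirit of \Cref{claim:m-four}. First I would record the relevant bookkeeping: since $a = 4$, the graph $G$ has exactly $n$ non-edges, and every non-edge has both endpoints in $U$ (vertices of $Z$ have codegree $0$), so $\sum_{u \in U}\codeg(u) = 2n$ and hence $\sum_{u \in U}(\codeg(u)-1) = 2n - |U|$ with $|U| = n - m$; moreover the assumption inherited from Case~1 gives $\codeg(u) \le (n+a)/3 = (n+4)/3$ for every $u$. Let $k := |\{u \in U : \codeg(u) \ge a+1\}|$; for each such $u$ we have $r(u) = a$, and for all other $u \in U$ we have $r(u) = \codeg(u)-1 \ge 0$. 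This yields the exact identity
\[
	\sum_{u \in U} r(u) = \sum_{u \in U}(\codeg(u)-1) - \sum_{u:\,\codeg(u)\ge a+1}(\codeg(u)-1-a),
\]
in which each subtracted term is non-negative and bounded above by $(n+4)/3 - 5 = (n-11)/3$, and also the crude bound $\sum_{u \in U} r(u) \ge ka$.

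The argument then splits on the size of $k$. If $k \ge 4$, then $\sum_{u \in U} r(u) \ge ka \ge 4a = 16 \ge 2a+6$, which settles both claims at once, so I would assume $k \le 3$. In that case the identity and the bound on the subtracted terms give
\[
	\sum_{u \in U} r(u) \ \ge\ (2n - |U|) - k\cdot\frac{n-11}{3} \ \ge\ (2n - |U|) - (n-11).
\]
For the first inequality, substituting $|U| = n - m$ gives $\sum_{u \in U} r(u) \ge (n+m) - (n-11) = m + 11 \ge 15 \ge 2a$, using $m \ge 4$. For the ``moreover'' inequality, the hypothesis $3m \ge n - 7$ gives $|U| = n - m \le (2n+7)/3$, hence $2n - |U| \ge (4n-7)/3$ and $\sum_{u \in U} r(u) \ge (4n-7)/3 - (n-11) = (n+26)/3 \ge 40/3 > 13$ by $n \ge 14$; since $\sum_{u\in U} r(u)$ is an integer, it is at least $14 = 2a+6$.

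I do not expect a genuine obstacle here, since this is essentially bookkeeping; the only points that need attention are to actually invoke the codegree cap $\codeg(u) \le (n+4)/3$ from Case~1 (this is exactly what controls the subtracted terms when $k \le 3$), and, for the ``moreover'' bound, to use integrality of $\sum_{u\in U} r(u)$ to pass from ``$> 13$'' to ``$\ge 14$'' (the bare inequality $(n+26)/3 \ge 14$ fails at $n = 14$). An equivalent alternative is to follow \Cref{claim:m-four} verbatim: assume $\sum_{u\in U} r(u) \le 2a-1$ (resp.\ $\le 2a+5$), conclude $k \le 1$ (resp.\ $k \le 3$), substitute into the identity, and derive $n < 14$, a contradiction.
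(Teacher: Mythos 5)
Your proof is correct and follows essentially the same counting argument as the paper: the same split on $k = |\{u : \codeg(u) \ge a+1\}|$, the same use of $\sum_{u\in U}\codeg(u) = 2(n-4+a)$ and the Case~1 cap $\codeg(u)\le (n+a)/3$. The only cosmetic difference is that you argue directly, taking the worst case $k=3$ and invoking integrality of $\sum_{u\in U} r(u)$ to pass from $40/3$ to $14$, whereas the paper argues by contradiction from $\sum_{u\in U} r(u)\le 2a+5$ (where integrality is built in) and checks $k\in\{0,1,2,3\}$ separately.
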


				\begin{proof} 
					The proof of \Cref{claim:m-four} can be repeated here to show that $\sum_{u \in U} r(u) \ge 2a$.

					For the second part, suppose that $\sum_{u \in U} r(u) \le 2a + 5$. 
					Let $k$ be the number of vertices $u$ with $\codeg(u) \ge a + 1$. If $k \ge 4$ we have $\sum_{u \in U} r(u) \ge 4a \ge 2a + 5$ (as $a = 4$), so we assume that $k \le 3$.
					\begin{align*} 
						2a + 5 \ge \sum_{u \in U} r(u) 
						& \ge ka + \sum_{u \in U} (\codeg(u) - 1) - k \left( \frac{n+a}{3} - 1 \right) \\
						& = \frac{2ka}{3} + 2(n - 4 + a) - (n - m) - \frac{kn}{3} + k \\
						& \ge \frac{2ka}{3} + \frac{(3 - k)n}{3} - 8 + k + 2a + \frac{n - 7}{3} \\
						& = \frac{2ka + (4 - k)n - 31 + 3k}{3} + 2a,
					\end{align*}
					using $3m \ge n - 7$.
					It follows that
					\begin{equation*} 
						(4 - k)n \le 46 - 2ka - 3k.
					\end{equation*}
					If $k = 0$ we obtain $4n \le 46$;
					if $k = 1$, we have $3n \le 46 - 2a - 3 = 35$;
					if $k = 2$, we have $2n \le 46 - 4a - 6 = 24$;
					and if $k = 3$, we obtain $n \le 46 - 6a - 9 = 13$.
					Either way, as $n \ge 14$, we reached a contradiction.
				\end{proof}
				 
				Define
				\begin{equation*} 
					\rho = \left\{
						\begin{array}{ll} 
							0 & 3m \le n - 8 \\
							\min\{6, m\beta\} & 3m \ge n - 7.
						\end{array}
					\right.
				\end{equation*}
				Let $\sigma$ be a function $\sigma: U \to \Z^{\ge 0}$ such that $\sigma(u) \le r(u)$ for every $u \in U$ and $\sum_{u \in U} \sigma(u) = 2a + \ceil{\rho}$; note that such $\sigma$ exists by \Cref{claim:sum-r}.

				Let $H$ be a bipartite auxiliary graph with vertex sets $X := \{u_0 : u \in U\} \cup \{\zeta\}$ and $Y := \{u_1 : u \in U\}$ and edges $(X \times Y) \setminus \{u_0 u_1: u \in U\}$.
				Define
				\begin{equation*} 
					\tau(x) = \left\{
						\begin{array}{ll}
							m \left(1 - \frac{\rho}{\beta m}\right) \cdot \sum_{v \in U} \beta_{uv} & x = u_0 \text{ for some $u \in U$} \\
							\frac{m}{2} \cdot \alpha & x = \zeta \\
							\codeg(u) - 1 - \sigma(u) & x = u_1 \text{ for some $u \in U$}.
						\end{array}
					\right.
				\end{equation*}

				\begin{claim} \label{claim:matching-rho}
					There is a fractional matching in $H$ that saturates $X$.
				\end{claim}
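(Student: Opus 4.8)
The plan is to follow the template of \Cref{claim:matching}: by the fractional form of Hall's theorem, a fractional matching of $H$ saturating $X$ exists provided $\tau(N(A)) \ge \tau(A)$ for every $A \subseteq X$, where $N$ denotes the neighbourhood in $H$. Since $H$ is the complete bipartite graph between $X$ and $Y$ with exactly the edges $u_0u_1$ ($u \in U$) removed, $N(A) = Y$ for every nonempty $A \subseteq X$ other than the singletons $\{u_0\}$, for which $N(\{u_0\}) = Y \setminus \{u_1\}$. Hence it suffices to verify
\begin{equation*}
	\tau(Y) \ge \tau(X) \qquad\text{and}\qquad \tau(Y \setminus \{u_1\}) \ge \tau(u_0) \ \text{ for all } u \in U .
\end{equation*}
(One also notes that all weights $\tau$ are nonnegative; for $\tau(u_0)$ this uses $m \ge \rho/\beta$, which will hold in each regime below, and for $\tau(u_1)$ it uses $\sigma(u) \le r(u) \le \codeg(u) - 1$.)

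I would first record the two totals. Using $\sum_{u \in U}\codeg(u) = 2(n-4+a) = 2n$ (as $a = 4$ and vertices of $Z$ have non-degree $0$), $|U| = n - m$, and $\sum_u\sigma(u) = 2a + \ceil{\rho}$, one gets $\tau(Y) = n + m - 8 - \ceil{\rho}$; and using $\sum_{u,v \in U}\beta_{uv} = 2\beta$ together with the definition of $\tau(u_0)$, one gets $\tau(X) = 2\beta m + \tfrac{m}{2}\alpha - 2\rho$. To prove $\tau(Y) \ge \tau(X)$ I would split on $\rho$. If $\rho = 0$ (in particular when $3m \le n - 8$, or when $\beta = 0$), then $\beta \le 2$ and $\alpha + \beta \le 2$ (both from \ref{itm:phi-two}) give $\tau(X) \le 4m \le n + m - 8 = \tau(Y)$. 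If $\rho = m\beta$ (so $3m \ge n - 7$ and $m\beta \le 6$), the $2\beta m$ term cancels, leaving $\tau(X) = \tfrac{m}{2}\alpha \le m$, while $\tau(Y) \ge n + m - 14 \ge m$ since $n \ge 14$. If $\rho = 6$, then $\tau(Y) - \tau(X) = n + m - 2 - 2\beta m - \tfrac{m}{2}\alpha \ge n - 2 - \tfrac{3}{2}\beta m$, so it suffices that $\beta m \le \tfrac{2}{3}(n-2)$; this follows by taking the stronger of $\beta m \le 2m$ and $\beta m < \tfrac{m}{3}(n+4-3m)$ — the latter holding because \eqref{eq:ineq-first-approach} fails in this case — using the concavity of $m \mapsto \tfrac{m}{3}(n+4-3m)$ and $n \ge 14$.

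For the second inequality I would bound $\tau(u_0) = (m - \rho/\beta)\sum_v\beta_{uv} \le m - \rho/\beta$ from above via $\sum_v\beta_{uv} \le 1$ (from \ref{itm:beta-one}), and bound $\tau(Y \setminus \{u_1\}) = n + m - 7 - \ceil{\rho} - \codeg(u) + \sigma(u)$ from below via $\codeg(u) \le (n+a)/3$ (valid since we are past Case~1). If $\rho = 0$, then $\tau(u_0) \le m$ and $\tau(Y\setminus\{u_1\}) \ge n + m - 7 - (n+4)/3 \ge m$ for $n \ge 13$. If $\rho = m\beta$ with $\beta > 0$, then $\tau(u_0) = 0$ and there is nothing to prove. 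If $\rho = 6$, then $\tau(u_0) \le m - 6/\beta \le m - 3$ (using $\beta \le 2$), and the inequality reduces to $\sigma(u) \ge \codeg(u) - n + 10$; this can be arranged by choosing $\sigma$ to give priority to the vertices of largest non-degree, which is feasible because: such a positive lower bound forces $n - 9 \le \codeg(u) \le (n+a)/3$, so only a few vertices are affected and (by $\sum_u\codeg(u) = 2n$) the lower bounds sum to at most $2a + \ceil{\rho}$; each individual lower bound is at most $r(u)$ (indeed $r(u) = a$ whenever the bound is positive, since then $\codeg(u) \ge n - 9 \ge 5$, while $\codeg(u) - n + 10 \le (n+4)/3 - n + 10 \le 2 \le a$); and $\sum_u r(u) \ge 2a + \ceil{\rho}$ by \Cref{claim:sum-r}, whose ``moreover'' clause applies since $\rho > 0$ forces $3m \ge n - 7$.

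The step I expect to be the main obstacle is precisely this $\rho = 6$ regime (large $m$): the crude estimate $\tau(u_0) \le m$ is then too weak, and one genuinely needs both the sharpened bound $\tau(u_0) \le m - 6/\beta$ — which is exactly why $\tau(u_0)$ was defined with the correction factor $1 - \rho/(\beta m)$ — and a choice of $\sigma$ concentrated on the vertices of largest non-degree while still summing to $2a + \ceil{\rho}$. The remaining regimes are routine and parallel the proof of \Cref{claim:matching}.
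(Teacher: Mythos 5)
Your proposal is correct and takes essentially the same route as the paper: apply the fractional Hall's theorem, verify $\tau(Y) \ge \tau(X)$ and $\tau(Y \setminus \{u_1\}) \ge \tau(u_0)$ separately in the regimes $\rho = 0$, $\rho = m\beta$ and $\rho = 6$, and in the hard $\rho = 6$ regime re-allocate $\sigma$ towards the vertices of largest non-degree, using the `moreover' part of \Cref{claim:sum-r}. Your only (valid) local deviations are bounding $m\beta \le \tfrac{2}{3}(n-2)$ by optimising the parabola in $m$ rather than in $\beta$ as the paper does, and imposing the exact requirement $\sigma(u) \ge \codeg(u) - n + 10$ instead of the paper's slightly stronger condition $\sigma(u) \ge 2$ whenever $\codeg(u) \ge (n+a)/3 - 2$.
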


				\begin{proof} 
					As in the proof of \Cref{claim:matching}, in order to prove that the required matching exists, it suffices to show that $\tau(Y) \ge \tau(X)$ and $\tau(Y \setminus \{u_1\}) \ge \tau(u_0)$ for every $u \in U$. 
					\begin{align*} 
						\tau(Y)
						& = \sum_{u \in U} (\codeg(u) - 1 - \sigma(u)) 
						= 2(n - 4 + a) - (n - m) - 2a - \ceil{\rho} 
						= n + m - 8 - \ceil{\rho} \\
						\tau(X)
						& = m \left(1 - \frac{\rho}{\beta m}\right) 2 \beta + \frac{m}{2} \cdot \alpha
						= 2m\beta - 2 \rho + \frac{m}{2} \cdot \alpha
						\le 4m - 2\rho.
					\end{align*}
					If $3m \le n - 8$ and $\rho = 0$, we have
					\begin{equation*} 
						\tau(Y) - \tau(X) 
						\ge n + m - 8 - 4m \ge 0,
					\end{equation*}
					as required.
					If $3m \ge n - 7$ and $\rho = 6$, we have
					\begin{align*} 
						\tau(Y) - \tau(X)
						& = n + m - 14 - 2m\beta + 12 - \frac{m}{2} \cdot \alpha \\
						& \ge n + m - 2 - 2m\beta - \frac{m}{2} \cdot (2 - \beta) \\
						& \ge \frac{\left( 2(n - 2) - 3m\beta \right)}{2}
					\end{align*}
					By the assumption that \eqref{eq:ineq-first-approach} does not hold, we have
					\begin{equation*} 
						3m\beta \le \beta(n + 4 - 3\beta) 
						\le 2(n - 2),
					\end{equation*}
					where the last inequality holds as $\beta(n + 4 - 3\beta)$ is increasing when $\beta \in [0, 2]$ (the derivative is $n + 4 - 6\beta \ge n - 8 > 0$), and is thus maximised at $\beta = 2$.
					It follows that $\tau(Y) \ge \tau(X)$ in this case.
					Finally, if $\rho = \beta m$, we have $\tau(u_0) = 0$ for every $u \in U$. Hence,
					\begin{equation*} 
						\tau(Y) - \tau(X)
						= n + m - 8 - \ceil{\rho} - \frac{\alpha m}{2} 
						\ge n + m - 14 - m 
						\ge 0,
					\end{equation*}
					where we used the inequalities $\rho \le 6$, $n \ge 14$ and $\alpha \le 2$. 
					We have thus verified that $\tau(Y) \ge \tau(X)$ for all possible values of $\rho$.
					
					We now show that $\tau(Y \setminus \{u_1\}) \ge \tau(u_0)$ for every $u \in U$. Note that when $\rho = \beta m$, $\tau(u_0) = 0$ for every $u \in U$, so this folds trivially. Next, suppose that $\rho \in \{0, 6\}$. Then, using \ref{itm:beta-one},
					\begin{equation*} 
						\tau(u_0) = \left(m - \frac{\rho}{\beta}\right) \sum_{w \in U} \beta_{uw} 
						\le m - \frac{\rho}{2}.
					\end{equation*}
					Thus, if $\rho = 0$, 
					\begin{align*} 
						\tau(Y \setminus \{u_1\}) - \tau(u_0)
						& = \tau(Y) - \codeg(u) + 1 + \sigma(u) - m \\
						& \ge n + m - 8 - \frac{n + a}{3} + 1 - m \\
						& = \frac{2n}{3} - \frac{a}{3} - 7\\
						& \ge \frac{2n - 25}{3}
						\ge 0,
					\end{align*}
					as $n \ge 14$. 
					Finally, consider the case $\rho = 6$. Before continuing, we modify $\sigma$, and before that, we note that there are at most eight vertices $u$ with $\codeg(u) \ge (n+a)/3 - 2$. Indeed, otherwise
					\begin{equation*} 
						2n = 2(n - 4 + a) = \sum_{u \in U} \codeg(u) \ge 8\left(\frac{n+a}{3} - 2\right) = \frac{8n - 16}{3},
					\end{equation*}
					a contradiction to $n \ge 14$. We now modify $\sigma$ so that $\sigma(u) \ge 2$ for every $u \in U$ with $\codeg(u) \ge (n+a)/3 - 2$; and $\sum_u \sigma(u) = 2a + 6 = 14$ (by the above argument such $\sigma$ exists). We thus have $\codeg(u) - \sigma(u) \le (n+a)/3 - 2$ for every $u \in U$, so
					\begin{align*} 
						\tau(Y \setminus \{u_1\}) - \tau(u_0)
						& = \tau(Y) - \codeg(u) + \sigma(u) + 1 - m + \frac{\rho}{2} \\
						& \ge n + m - 8 - \rho - \frac{n+a}{3} + 3 - m + \frac{\rho}{2} \\
						& = \frac{2n}{3} - \frac{a}{3} - 5 - \frac{\rho}{2} \\
						& = \frac{2n - 28}{3} \ge 0,
					\end{align*}
					as $n \ge 14$, $a = 4$ and $\rho = 6$.
				\end{proof}

				Consider a fractional matching as in \Cref{claim:matching-rho}, define $\nu_u(v)$ to be the weight of the edge $u_0 v_1$ for distinct $u, v \in U$, and define $\nu_u(\zeta)$ to be the weight of $\zeta u_1$ for $u \in U$. 
				Then 
				\begin{align*} 
					& \sum_{v \in U} \nu_u(v) + \nu_u(\zeta) \le \tau(u_1) = \codeg(u) - 1 - \sigma(u),\\
					& \sum_{u \in U} \nu_u(v) = \tau(v_0) = \left(m - \frac{\rho}{\beta}\right)\sum_{u \in U}\beta_{uv},\\ 
					& \sum_{u \in U} \nu_u(\zeta) = \tau(\zeta) = \frac{m}{2} \cdot \alpha.
				\end{align*}
				Let $G_u$ be the graph obtained from $G$ by removing the vertex $u$; decreasing the weight of $vz$, where $v \in U$ and $z \in Z$, by $\nu_u(v) / m$; and decreasing the weight of $zz'$, where $z, z' \in Z$, by $\nu_u(\zeta) / \binom{m}{2}$. Note that the weights of the edges of $G_u$ are non-negative, by \ref{itm:beta-one} and \ref{itm:phi-two}.  
				The missing weight in $G_u$ is
				\begin{align*} 
					n - 4 + a - \codeg(u) + \sum_{v \in U} \nu_u(v) + \nu_u(\zeta) 
					& \le n - 4 + a - \codeg(u) + \codeg(u) - 1 - \sigma(u) \\
					& = (n - 1) - 4 + (a - \sigma(u)).
				\end{align*}
				Thus, by induction and by \Cref{lem:integer-to-fractional}, there is a fractional triangle packing $\omega_u$ in $G_u$ that has no heavy triangles and has uncovered weight at most $a - \sigma(u)$; let $\psi_u$ be the weighting corresponding the to weight uncovered by $\omega_u$.

				Let $\omega'$ be the fractional triangle packing defined as follows, for distinct $u, v \in U$ and $z, z', z'' \in Z$.
				\begin{equation*} 
					\omega'(uvz) = \left(1 - \frac{\rho}{\beta m}\right) \beta_{uv} \qquad \qquad
					\omega'(uzz') = \alpha_u \qquad \qquad
					\omega'(zz'z'') = \gamma.
				\end{equation*}
				Let $\psi'$ be the edge-weighting defined by $\psi'(e) = \rho \beta_{e} / \beta$ if $e = uv$ for $u, v \in U$, and setting $\psi(e) = 0$ otherwise. Define 
				\begin{equation*} 
					\omega = \frac{1}{n - 2}\left( \sum_{v \in V(G)} \omega_v + \omega'\right) \qquad \psi = \frac{1}{n-2}\left( \sum_{v \in V(G)} \psi_v + \psi' \right).
				\end{equation*}
				\begin{claim} \label{claim:m-four-end-three}
					\hfill
					\begin{enumerate} [label = \rm(\alph*)]
						\item \label{itm:omega-psi-one}
							$\omega(e) + \psi(e) = 1$ for every edge $e$ in $G$,
						\item \label{itm:psi-bounded-a}
							$\sum_{e \in E(G)} \psi(e) \le a$.
					\end{enumerate}
				\end{claim}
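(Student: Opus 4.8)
The proof will mirror that of \Cref{claim:m-four-end} (with the bookkeeping of \Cref{claim:m-four-end-two} reappearing for the edges inside $Z$), the only genuinely new ingredients being the correction weighting $\psi'$ and the scaling factor $1 - \rho/(\beta m)$ attached to the triangles $uvz$ with $u,v \in U$. Throughout one reads $\rho/\beta$ and $\beta_e/\beta$ as $0$ in the degenerate case $\beta = 0$; this is harmless, since $\beta = 0$ forces every $\beta_e = 0$ and (by the definition of $\rho$) $\rho = 0$, whence $\psi' \equiv 0$, $\omega'(uvz) = 0$ and $\tau(u_0) = 0$ for all $u \in U$.

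Part \ref{itm:psi-bounded-a} is an easy count of uncovered weights. Since $\psi = \frac{1}{n-2}\bigl(\sum_{v \in V(G)} \psi_v + \psi'\bigr)$, since $\sum_{e} \psi_z(e) \le a$ for every $z \in Z$ (the uncovered weight of $\omega_z'$ is at most $a$ and $\psi_z$ is non-negative), $\sum_{e} \psi_u(e) \le a - \sigma(u)$ for every $u \in U$ (by the choice of $\omega_u$), and $\sum_{e \in E(G)} \psi'(e) = \frac{\rho}{\beta}\sum_{e \in E(G[U])}\beta_e = \rho$, we get
\[
	\sum_{e \in E(G)} \psi(e) \;\le\; \frac{1}{n-2}\Bigl( m a + \sum_{u \in U}\bigl(a - \sigma(u)\bigr) + \rho \Bigr) \;=\; \frac{na - (2a + \ceil{\rho}) + \rho}{n - 2} \;\le\; a,
\]
using $\sum_{u \in U}\sigma(u) = 2a + \ceil{\rho}$ and $\rho \le \ceil{\rho}$.

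For part \ref{itm:omega-psi-one}, I would fix an edge $e$ of $G$ and expand $(n-2)\bigl(\omega(e) + \psi(e)\bigr) = \sum_{v \in V(G)}\bigl(\omega_v(e) + \psi_v(e)\bigr) + \omega'(e) + \psi'(e)$, where the summands with $v \in e$ vanish (as $e$ is not an edge of $G_v$), and then split into three cases according to how $e$ meets $Z$. If $e = uv$ with $u, v \in U$: each of the $m$ vertices $z \in Z$ contributes $\omega_z(e) + \psi_z(e) = 1 - \beta_{uv}$ by property~(i); each of the $n - m - 2$ vertices $w \in U \setminus \{u,v\}$ contributes $1$, as the weight of $e$ is untouched in $G_w$; and $\omega'(e) + \psi'(e) = m\bigl(1 - \frac{\rho}{\beta m}\bigr)\beta_{uv} + \frac{\rho}{\beta}\beta_{uv} = m\beta_{uv}$; the total is $n-2$. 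If $e = uz$ with $u \in U$, $z \in Z$: the $m-1$ vertices $z' \in Z \setminus \{z\}$ each contribute $1 - \alpha_u$; summing $\omega_w(e) + \psi_w(e) = 1 - \nu_w(u)/m$ over $w \in U \setminus \{u\}$ gives $(n - m - 1) - \bigl(1 - \frac{\rho}{\beta m}\bigr)\sum_{v \in U}\beta_{uv}$, using the matching saturation $\sum_{w}\nu_w(u) = \tau(u_0)$; and $\omega'(e) = \bigl(1 - \frac{\rho}{\beta m}\bigr)\sum_{v \in U}\beta_{uv} + (m-1)\alpha_u$ while $\psi'(e) = 0$; everything cancels to $n-2$. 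If $e = zz'$ with $z, z' \in Z$: the $m-2$ vertices $z'' \in Z \setminus \{z,z'\}$ each contribute $1 - \gamma$; summing $\omega_u(e) + \psi_u(e) = 1 - \nu_u(\zeta)/\binom{m}{2}$ over $u \in U$ gives $(n - m) - \frac{\alpha}{m-1}$, using $\sum_u \nu_u(\zeta) = \frac{m}{2}\alpha$; and $\omega'(e) = (m-2)\gamma + \frac{\alpha}{m-1}$ while $\psi'(e) = 0$; again the total is $n-2$. Dividing by $n-2$ gives $\omega(e) + \psi(e) = 1$ in all cases.

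Given the claim, I would finish the case and the proof of \Cref{thm:n-four-a}: $\omega$ is indeed a fractional triangle packing, since all its triangle weights are non-negative (inherited from the $\omega_v$ and from $\omega'$, whose triangle weights are non-negative because $\rho \le m\beta$), and $\omega(e) = 1 - \psi(e) \le 1$ as $\psi(e) \ge 0$; its uncovered weight is $\sum_e \psi(e) \le a$ by \ref{itm:psi-bounded-a}; and it has no heavy triangle, because each triangle lies in at most $n-2$ of the packings $\{\omega_v : v \in V(G)\} \cup \{\omega'\}$, none of which has a heavy triangle (for $\omega'$ this follows from the absence of heavy triangles in the $\omega_z'$, which bounds each $\beta_{uv}$ and $\alpha_u$ by $1/2$). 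The step I expect to be most delicate is the middle case $e = uz$: one must account precisely for how much weight each $G_w$, $w \in U$, removes from $uz$, and then recognise this removed amount as exactly the ``deficit'' in $\omega'$'s weight on the triangles $uwz$, so that the two effects cancel; this is where the factor $1 - \rho/(\beta m)$ and the saturation identities have to be threaded through consistently, and where the $\beta = 0$ degeneracy must be kept in mind.
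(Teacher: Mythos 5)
Your proof is correct and follows the paper's approach: part \ref{itm:psi-bounded-a} is the same count (using $\sum_u \sigma(u) = 2a + \ceil{\rho}$ and $\rho \le \ceil{\rho}$), and for part \ref{itm:omega-psi-one} you have correctly carried out the case analysis that the paper omits with a reference to \Cref{claim:m-four-end}, including the cancellation of the $1 - \rho/(\beta m)$ factor against $\psi'$ and the matching saturation identities. Your explicit handling of the degenerate case $\beta = 0$ is a sensible addition that the paper leaves implicit.
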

				\begin{proof} 
					Recall that $\sum_e \psi_v(e) \le a - \sigma(v)$ for every $v \in U$ (setting $\sigma(z) = 0$ for $z \in Z$). Thus 
					\begin{align*} 
						\sum_{v \in V(G), \,\, e \in E(G)} \psi_v(e) + \sum_{e \in E(G)} \psi'(e)
						& \le na - \sum_{u \in U} \sigma(u) + \sum_{e \in E(G[U])} \frac{\rho \beta_{e}}{\beta} \\
						& = na - 2a - \ceil{\rho} + \rho 
						\le (n - 2)a,
					\end{align*}
					thus proving \ref{itm:psi-bounded-a}.
					The rest of the proof is very similar to that of \Cref{claim:m-four-end}; we omit further details.
				\end{proof}
				This completes the proof of \Cref{thm:n-four-a}.
			\end{proof}

\bibliography{triangle-packing}

\providecommand{\bysame}{\leavevmode\hbox to3em{\hrulefill}\thinspace}
\providecommand{\MR}{\relax\ifhmode\unskip\space\fi MR }
% \MRhref is called by the amsart/book/proc definition of \MR.
\providecommand{\MRhref}[2]{%
  \href{http://www.ams.org/mathscinet-getitem?mr=#1}{#2}
}
\providecommand{\href}[2]{#2}
\begin{thebibliography}{10}

\bibitem{barber-et-al}
B.~Barber, D.~K\"uhn, A.~Lo, and D.~Osthus, \emph{Edge-decompositions of graphs
  with high minimum degree}, Adv. Math. \textbf{288} (2016), 337--385.

\bibitem{delcourt-postle}
M.~Delcourt and L.~Postle, \emph{{Progress towards Nash-Williams' Conjecture on
  Triangle Decompositions}}, arXiv:1909.00514 (2019).

\bibitem{dross}
F.~Dross, \emph{Fractional triangle decompositions in graphs with large minimum
  degree}, SIAM J. Discr. Math. \textbf{30} (2015), 36--42.

\bibitem{dukes}
P.~Dukes, \emph{Rational decomposition of dense hypergraphs and some related
  eigenvalue estimates}, Linear Algebra and its applications \textbf{436}
  (2012), 3726--3746, (see arXiv:1108.1576 for an erratum).

\bibitem{erdos}
P.~Erd\H{o}s, \emph{Some recent problems and results in graph theory}, Discr.
  Math. \textbf{164} (1997), 81--85.

\bibitem{erdos-gallai}
P.~Erd\H{o}s and T.~Gallai, \emph{Graphs with given degree of vertices}, Mat.
  Lapok [Hungarian] \textbf{11} (1960), 264--274.

\bibitem{garaschuk}
K.~Garaschuk, \emph{Linear methods for rational triangle decompositions}, Ph.D.
  thesis, University of Victoria, 2014.

\bibitem{us1}
V.~Gruslys and S.~Letzter, \emph{Monochromatic triangle packings in red-blue
  graphs}, arXiv:2008.05311.

\bibitem{gustavsson}
T.~Gustavsson, \emph{Decompositions of large graphs and digraphs with high
  minimum degree}, Ph.D. thesis, University of Stockholm, 1991.

\bibitem{haxell-rodl}
P.~Haxell and V.~R\"odl, \emph{Integer and fractional packings in dense
  graphs}, Combinatorica \textbf{21} (2001), 13--38.

\bibitem{mckay-piperno}
B.~D. McKay and A.~Piperno, \emph{{Practical Graph Isomorphism, II}}, J.
  Symbolic Computation \textbf{60} (2013), 94--112.

\bibitem{nash-williams}
C.~S.~J. Nash-Williams, \emph{An unsolved problem concerning decomposition of
  graphs into triangles}, Combinatorial Theory and its Applications
  \textbf{III} (1070), 1179--1183.

\bibitem{ore}
O.~Ore, \emph{{Note on Hamilton Circuits}}, Amer. Math. Monthly \textbf{67}
  (1960), 55--55.

\bibitem{yuster05}
R.~Yuster, \emph{{Asymptotically optimal $K_k$-packings of dense graphs via
  fractional $K_k$-decompositions}}, J. Combin. Theory Ser. B \textbf{95}
  (2005), 1--11.

\end{thebibliography}
\bibliographystyle{amsplain}

	\appendix

\end{document}